\providecommand{\U}[1]{\protect\rule{.1in}{.1in}}
\providecommand{\U}[1]{\protect\rule{.1in}{.1in}}
\newtheorem{theorem}{Theorem}
\newtheorem{corollary}[theorem]{Corollary}
\newenvironment{proof}[1][Proof]{\noindent\textbf{#1.} }{\ \rule{0.5em}{0.5em}}
\begin{document}

\begin{center}
{\Large  The (}${ \it {r}_{1}{ ,\ldots, r}_{p}}$\uppercase{ )-Bell polynomials}

\ \ \

{\large Mohammed Said Maamra  and   Miloud Mihoubi}

USTHB, Faculty of Mathematics, Po. Box 32 El Alia 16111 Algiers, Algeria.

{\large mmaamra@usthb.dz  or  mmaamra@yahoo.fr}
\\
{\large mmihoubi@usthb.dz  or  miloudmihoubi@gmail.com}
\end{center}

$\ \ \ \newline$\noindent\textbf{Abstract.} In a previous paper, Mihoubi et al. introduced the
$\left(  r_{1},\ldots,r_{p}\right)  $-Stirling numbers and the $\left(
r_{1},\ldots,r_{p}\right)  $-Bell polynomials and gave some of their combinatorial and
algebraic properties. These numbers and polynomials generalize, respectively, the
$r$-Stirling numbers of the second kind introduced by Broder and the $r$-Bell
polynomials introduced by Mez\H{o}. In this paper, we prove that the $\left(
r_{1},\ldots,r_{p}\right)  $-Stirling numbers of the second kind are log-concave.
We also give generating functions and generalized recurrences related to the  $\left(  r_{1},\ldots,r_{p}\right)  $-Bell polynomials.

\noindent\textbf{Keywords. }The $(r_{1},\ldots r_{p})$-Bell polynomials; the
$(r_{1},\ldots r_{p})$-Stirling numbers; log-concavity; generalized recurrences; generating functions.

\noindent Mathematics Subject Classification 2010: 11B73; 05A10; 11B83.

\section{Introduction}
In 1984, Broder \cite{bro} introduced and studied the $r$-Stirling number of second kind ${ n \brace k}_{\!\!r},$ which counts the number of partitions of the set $\left[  n\right]  =\left\{
1,2,\ldots,n\right\}  $ into $k$ non-empty subsets such that the $r$ first elements are in distinct subsets.
In 2011, Mez\H{o} \cite{mez1} introduced and studied the $r$-Bell polynomials.
In 2012, Mihoubi et al. \cite{mih1} introduced and studied the $\left(  r_{1},\ldots,r_{p}\right)  $-Stirling number of second kind ${ n \brace k }_{\!\!r_{1},\ldots,r_{p}},$
which counts the number of partitions of the set $\left[
n\right]  $ into $k$ non-empty subsets such that the elements of each of the $p$ sets
$R_{1} :=\left\{  1,\ldots,r_{1}\right\}  ,$ \  $R_{2} :=\left\{  r_{1}+1,\ldots,r_{1}+r_{2}\right\}  ,$
\ldots, $R_{p}  :=\left\{  r_{1}+\cdots r_{p-1}+1,\ldots,r_{1}+\cdots+r_{p}\right\}$
are in distinct subsets.
\vskip 2pt
\noindent This work is motivated by the study of the $r$-Bell polynomials \cite{mez1}, the $(\ \!\!r_{1},\ldots,r_{p})$-Stirling numbers of the second kind \cite{mih1}, in which we may establish
\begin{itemize}
\item the log-concavity of the $\left(  r_{1},\ldots,r_{p}\right)  $-Stirling
numbers of the second kind,
\item a generalized recurrences for the $\left(  r_{1},\ldots,r_{p}\right)  $-Bell polynomials, and%
\item the ordinary generating functions of these numbers and polynomials.
\end{itemize}
To begin, by the symmetry of $\left(  r_{1},\ldots,r_{p}\right)  $-Stirling numbers respect
to $r_{1},\ldots,r_{p},$ let us to suppose $r_{1}\leq r_{2}\leq\cdots\leq r_{p}$ and throughout this paper,
we use the following notations and definitions%
\begin{align*}
\mathbf{r}_{p} &  :=\left(  r_{1},\ldots,r_{p}\right)  , \ \ \left\vert
\mathbf{r}_{p}\right\vert  :=r_{1}+\cdots+r_{p}, \\
P_{t}\left(  z;\mathbf{r}_{p}\right) & :=\left(  z+r_{p}\right)
^{t}\left(  z+r_{p}\right)  ^{\underline{r_{1}}}\cdots\left(  z+r_{p}\right)
^{\underline{r_{p-1}}},\ \ t\in\mathbb{R},\\
B_{n}\left(  z;\mathbf{r}_{p}\right)   &  :=\underset{k=0}{\overset
{n+\left\vert \mathbf{r}_{p-1}\right\vert }{\sum}}%
{ n+\left\vert \mathbf{r}_{p}\right\vert \brace k+r_{p} }
_{\!\!\mathbf{r}_{p}}z^{k},\ \ n\geq0
\end{align*}
and $\mathbf{e}_{i}$ denote the i-$th$ vector of the canonical basis of $\mathbb{R}^{p}.$ \\
In \cite{mih1}, we have proved the following
\begin{align}%
B_{n}\left(  z;\mathbf{r}_{p}\right)  &=\exp\left(  -z\right)  \underset
{k\geq0}{\sum}P_{n}\left(  k;\mathbf{r}_{p}\right)  \frac{z^{k}}{k!}\label{9}\\
P_{n}\left(  z;\mathbf{r}_{p}\right) & =\underset{k=0}{\overset{n+\left\vert
\mathbf{r}_{p-1}\right\vert }{\sum}}%
{ n+\left\vert \mathbf{r}_{p}\right\vert \brace k+r_{p} }
_{\!\!\mathbf{r}_{p}}z^{\underline{k}}.\label{7}%
\end{align}
The following introduced numbers $a_{k}\left(  \mathbf{r}_{p-1}\right)$ will be used later:
\[
a_{k}\left(  \mathbf{r}_{p-1}\right)  =\left(  -1\right)  ^{\left\vert
\mathbf{r}_{p-1}\right\vert -k}\underset{\left\vert \mathbf{j}_{p-1}%
\right\vert =k}{\sum}\genfrac{[}{]}{0pt}{}{r_{1}}{j_{1}}\cdots%
\genfrac{[}{]}{0pt}{}{r_{p-1}}{j_{p-1}},\ \ \ \left\vert \mathbf{j}_{p-1}\right\vert =j_{1}+\cdots+j_{p-1}.
\]
where $\genfrac{[}{]}{0pt}{}{n}{k}$ are the absolute Stirling numbers of the first kind. The exponential generating function of the sequence
$(a_{k}\left(  \mathbf{r}_{p-1}\right); k\geq0)$ shows that we have
\begin{equation}
\underset{k=0}{\overset{\left\vert \mathbf{r}_{p-1}\right\vert }{\sum}}%
a_{k}\left(  \mathbf{r}_{p-1}\right)  u^{k}=\left(  u\right)  ^{\underline
{r_{1}}}\cdots\left(  u\right)  ^{\underline{r_{p-1}}}.\label{10}%
\end{equation}
On our contribution, we give more properties for the $\mathbf{r}_{p}$-Stirling
numbers and $\mathbf{r}_{p}$-Bell Polynomials organized as follows:
\\In the next section we generalize some results of those given by Mez\H{o} \cite{mez2}, in which, we
prove that the sequence $\left ({n+\left\vert \mathbf{r}_{p}\right\vert \brace k+r_{p}}_{\!\!%
\mathbf{r}_{p}};\ 0\leq k\leq n+\left\vert \mathbf{r}_{p-1}\right\vert \right )$
is strongly log-concave and we give an approximation of
${n+\left\vert \mathbf{r}_{p}\right\vert \brace k+r_{p}}_{\!\!\mathbf{r}_{p}}$ when $%
n\rightarrow \infty $ for fixed $k$. In the third section we generalize some results of those given by Mihoubi et al. \cite{mih2}, in which, we write $B_{n}\left(
z;\mathbf{r}_{p}\right) $ in the basis $\left\{ B_{n+k}\left( z;r_{p}\right) :0\leq k\leq \left\vert \mathbf{r}_{p-1}\right\vert \right\}$
and $B_{n+m}\left( z;\mathbf{r}_{p}\right) $ in the family of bases
$\left\{ z^{j}B_{m}\left( z;\mathbf{r}_{p}+j\mathbf{e}_{p}\right) :0\leq j\leq n\right\} .$
As consequences, we also give some identities for the $\mathbf{r}_{p}$-Stirling numbers.
In the fourth section we give the ordinary generating functions of the $\mathbf{r}_{p}$-Stirling numbers of the second kind and the $\mathbf{r}_{p}$-Bell polynomials.
\section{Log-concavity of the $\mathbf{r}_{p}$-Stirling numbers}
In this section, we discuss the real roots of the polynomial $B_{n}\left(  z;\mathbf{r}_{p}\right),$ the log-concavity of the sequence $\left({ n+\left\vert \mathbf{r}_{p}\right\vert \brace k+r_{p} }
_{\!\!\mathbf{r}_{p}},\ 0\leq k\leq n+\left\vert \mathbf{r}_{p-1}\right\vert
\right)  ,$  the greatest maximizing index of ${ n \brace k }
_{\!\!\mathbf{r}_{p}}$ and we give an approximation of ${ n+\left\vert \mathbf{r}_{p}\right\vert \brace m+r_{p} }
_{\!\!\mathbf{r}_{p}}$ when $n$ tends to infinity. The case $p=1$ was studied by Mez\H{o} \cite{mez2} and other study is given by Zhao \cite{zha} on a large class of Stirling numbers.
\begin{theorem}
\label{T1}The roots of the polynomial $B_{n}\left(  z;\mathbf{r}_{p}\right)$ are real and negative.
\end{theorem}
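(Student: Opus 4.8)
The plan is to exploit the representation \eqref{9}, namely $B_{n}(z;\mathbf{r}_{p}) = \exp(-z)\sum_{k\geq 0} P_{n}(k;\mathbf{r}_{p}) z^{k}/k!$, and to reduce the problem of real-rootedness to a known result for the classical Bell (or $r$-Bell) polynomials via a polynomial identity. First I would rewrite $P_{n}(z;\mathbf{r}_{p})$. By definition $P_{n}(z;\mathbf{r}_{p}) = (z+r_{p})^{n}(z+r_{p})^{\underline{r_{1}}}\cdots(z+r_{p})^{\underline{r_{p-1}}}$, and expanding the product of falling factorials using the (absolute) Stirling numbers of the first kind together with \eqref{10} gives $P_{n}(z;\mathbf{r}_{p}) = (z+r_{p})^{n}\sum_{k=0}^{|\mathbf{r}_{p-1}|} a_{k}(\mathbf{r}_{p-1})(z+r_{p})^{k}$; more precisely, since $(u)^{\underline{r_{1}}}\cdots(u)^{\underline{r_{p-1}}} = \sum_k a_k(\mathbf{r}_{p-1}) u^{k}$, we get $P_{n}(z;\mathbf{r}_{p}) = \sum_{k=0}^{|\mathbf{r}_{p-1}|} a_{k}(\mathbf{r}_{p-1}) (z+r_{p})^{n+k}$.

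The key step is then to feed this into \eqref{9} and recognize each summand. Since $\exp(-z)\sum_{j\geq 0}(j+r_{p})^{m} z^{j}/j! = B_{m}(z;r_{p})$ is exactly Mez\H{o}'s $r$-Bell polynomial (the $p=1$ case of our construction), the identity \eqref{9} yields the expansion
\[
B_{n}(z;\mathbf{r}_{p}) = \sum_{k=0}^{|\mathbf{r}_{p-1}|} a_{k}(\mathbf{r}_{p-1})\, B_{n+k}(z;r_{p}).
\]
This is precisely the basis expansion announced for Section 3, so I would either prove it here or forward-reference it. Now the strategy splits: I would show this linear combination is itself real-rooted. The cleanest route is to observe that the $r$-Bell polynomials $B_{m}(z;r)$ satisfy a simple relation under the "derivative-like" operator $\theta = z\frac{d}{dz}$ shifted by $r$: concretely $B_{m+1}(z;r) = (z\frac{d}{dz} + r + z)B_{m}(z;r)$, or equivalently $e^{z}z^{-r}B_{m}(z;r)$ transforms nicely. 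Iterating, $B_{n+k}(z;r_{p})$ is obtained from $B_{n}(z;r_{p})$ by applying $(\theta + r_{p} + z)^{k}$, so
\[
B_{n}(z;\mathbf{r}_{p}) = \Bigl(\sum_{k} a_{k}(\mathbf{r}_{p-1})(\theta + r_{p} + z)^{k}\Bigr) B_{n}(z;r_{p}) = \bigl((\theta+r_{p})^{\underline{r_{1}}}\cdots(\theta+r_{p})^{\underline{r_{p-1}}} + \text{shift by }z\text{ terms}\bigr)\cdots
\]
Here I must be careful: $\theta + r_{p} + z$ does not commute with itself in an obvious way, so the operator $\sum_k a_k(\mathbf{r}_{p-1})(\theta+r_{p}+z)^{k}$ is not literally $\prod(\theta+r_{p}+z)^{\underline{r_i}}$; one must track the ordering. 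The honest approach is to induct on $p$. For $p=1$ the statement is Mez\H{o}'s theorem that $B_{n}(z;r)$ has only real negative roots (Theorem in \cite{mez2}), which I assume. For the inductive step, I would use that multiplying $B_{n}(z;\mathbf{r}_{p-1})$'s real-rootedness is preserved under the operator $z\mapsto$ "add one more block of size $r_{p}$", which at the level of generating functions corresponds to an operation known to preserve real-rootedness (a Laguerre–Pólya type multiplier sequence / the fact that $\exp(-z)$ times a Laplace-type transform of a polynomial with positive coefficients again has real roots, via the classical result that $\sum a_i z^i/i! \mapsto \sum a_i z^i$ and back preserve real-negative-rootedness on suitable classes).

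The main obstacle I anticipate is making the operator bookkeeping rigorous: showing that the particular linear combination $\sum_k a_k(\mathbf{r}_{p-1})B_{n+k}(z;r_p)$ has only real roots, given that the $a_k(\mathbf{r}_{p-1})$ alternate in sign (they are $(-1)^{|\mathbf{r}_{p-1}|-k}$ times nonnegative quantities). Alternating signs mean one cannot simply invoke "nonnegative combinations of real-rooted polynomials" — that is false in general. The resolution should come from the factored form: since $a_k(\mathbf{r}_{p-1})$ are the coefficients of $\prod_{i=1}^{p-1}(u)^{\underline{r_i}}$, and the operator sending $u^k \mapsto B_{n+k}(z;r_p)$ is realized by a chain of root-interlacing-preserving maps, the product structure $\prod (u)^{\underline{r_i}} = \prod_{i}\prod_{s=0}^{r_i-1}(u-s)$ lets me apply one factor $(u-s)$ at a time, each step being "apply $(\theta + r_p + z) - s$" to the previous polynomial, and each such step preserves real-rootedness because $\theta + c$ (with $c$ real) acting on a polynomial with only real roots in $(-\infty,0]$ — suitably interpreted through $e^z z^{-r_p}(\cdot)$ — again gives one, by Laguerre's theorem on the action of $z\frac{d}{dz}+c$. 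Thus the proof skeleton is: (1) derive $B_n(z;\mathbf{r}_p) = \sum_k a_k(\mathbf{r}_{p-1}) B_{n+k}(z;r_p)$ from \eqref{9} and \eqref{10}; (2) rewrite the right side as an ordered product of linear operators $\prod_{i,s}(\theta + r_p + z - s)$ applied to $B_n(z;r_p)$; (3) invoke the $p=1$ base case \cite{mez2} and Laguerre-type closure to conclude inductively that all roots are real, and a sign/leading-coefficient check to see they are nonpositive (in fact negative for $n\geq 1$).
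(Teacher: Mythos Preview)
Your approach is viable but genuinely different from the paper's. The paper inducts on $p$ via the identity
\[
B_n(z;\mathbf{r}_{p+1}) \;=\; e^{-z}\,\frac{d^{\,r_{p+1}}}{dz^{\,r_{p+1}}}\!\left(z^{r_p}e^{z}B_n(z;\mathbf{r}_p)\right),
\]
derived straight from \eqref{9}, and applies Rolle's theorem $r_{p+1}$ times to the intermediate polynomials $B_n^{(j)}(z;\mathbf{r}_p):=e^{-z}\frac{d^{\,j}}{dz^{\,j}}\bigl(z^{r_p}e^{z}B_n(z;\mathbf{r}_p)\bigr)$, carefully tracking the multiplicity of the root at $z=0$. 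You instead jump from the single-index case $r_p$ directly to $\mathbf{r}_p$ via the expansion $B_n(z;\mathbf{r}_p)=\sum_k a_k(\mathbf{r}_{p-1})B_{n+k}(z;r_p)$ (which is precisely Theorem~\ref{T3}, proved in Section~3), recognized as $p(X)B_n(z;r_p)$ for $X=z\frac{d}{dz}+z+r_p$ and $p(u)=\prod_{i<p}u^{\underline{r_i}}$. Two points to tighten. First, $X$ trivially commutes with itself, so $p(X)=\prod_{i<p}\prod_{s=0}^{r_i-1}(X-s)$ with no ordering issue; your hesitation there is unfounded. Second, the one-step closure you need is not ``Laguerre's theorem on $\theta+c$'' but the concrete identity $(X-s)f=e^{-z}z^{1-c}\frac{d}{dz}\bigl(z^{c}e^{z}f\bigr)$ with $c=r_p-s$; since $r_1\le\cdots\le r_p$ forces $s<r_i\le r_p$, this $c$ is a positive integer, and the usual Rolle argument applied to $e^{z}z^{c}f$ (zeros at the negative roots of $f$ and a zero of order $c$ at the origin, plus the vanishing of $e^{z}z^{c}f$ at $-\infty$) shows that $(X-s)f$ again has only real negative roots. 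With these fixes your argument goes through; it has the merit of linking real-rootedness directly to the basis expansion of Section~3, at the cost of needing Theorem~\ref{T3} before Theorem~\ref{T1}, whereas the paper's proof is self-contained in Section~2.
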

\begin{proof}
We will show by induction on $p$ that the roots of the polynomials $B_{n}\left( z;%
\mathbf{r}_{p}\right) $ are real and negative. Firstly, we may use the following polynomials
\begin{equation*}
B_{n}^{\left( j\right) }\left( z;\mathbf{r}_{p}\right) :=\exp \left(
-z\right) \frac{d^{j}}{dz^{j}}\left( z^{r_{p}}\exp \left( z\right)
B_{n}\left( z;\mathbf{r}_{p}\right) \right) ,\ \ 0\leq j\leq r_{p+1}.
\end{equation*}%
We note that these polynomials are of the same degree $n+\left\vert \mathbf{r}_{p}\right\vert $ which
satisfy%
\begin{equation*}
B_{n}^{\left( j+1\right) }\left( z;\mathbf{r}_{p}\right) =\exp \left(
-z\right) \frac{d}{dz}\left( \exp \left( z\right) B_{n}^{\left( j\right)
}\left( z;\mathbf{r}_{p}\right) \right)
\end{equation*}%
and, for  $0\leq j\leq r_{p},$ there exists a polynomial $Q_{n,j}\left( z;\mathbf{r}_{p}\right) $ such
that
\[
B_{n}^{\left( j\right) }\left( z;\mathbf{r}_{p}\right)=z^{r_{p}-j}Q_{n,j}\left( z;\mathbf{r}_{p}\right).
\]
Furthermore, we may establish the following
\begin{equation}
B_{n}\left( z;\mathbf{r}_{p+1}\right) =\exp \left( -z\right) \frac{%
d^{r_{p+1}}}{dz^{r_{p+1}}}\left( z^{r_{p}}\exp \left( z\right) B_{n}\left( z;%
\mathbf{r}_{p}\right) \right) .  \label{2}
\end{equation}%
Indeed, by (\ref{7}) we get
\begin{align*}
\frac{d^{r_{p+1}}}{dz^{r_{p+1}}}\left( z^{r_{p}}\exp \left( z\right)B_{n}\left( z;\mathbf{r}_{p}\right) \right)
=\frac{d^{r_{p+1}}}{dz^{r_{p+1}}}\left( \underset{k\geq 0}{\sum }P_{n}\left( k;\mathbf{r}_{p}\right)\frac{z^{k+r_{p}}}{k!}\right)
\end{align*}%
and this can be written as
\begin{align*}
\underset{k\geq r_{p+1}-r_{p}}{\sum }P_{n}\left( k;\mathbf{r}_{p}\right)\left(  k+r_{p}\right)  ^{\underline{r_{p+1}}}\frac{z^{k+r_{p}-r_{p+1}}}{k!}
& =\underset{k\geq 0}{\sum }P_{n}\left( k;\mathbf{r}_{p+1}\right)\frac{z^{k}}{k!}
\\ &=\exp \left( z\right) B_{n}\left( z;\mathbf{r}_{p+1}\right) .
\end{align*}%
Secondly, the proof by induction on $p$ is as follows: \\ For $p=0$ we set  $B_{n}\left( z;\mathbf{r}%
_{p}\right):= B_{n}\left( z\right)$ which is the classical Bell polynomial and for $p=1$ the
polynomial $B_{n}\left( z;r_{1}\right) $ is the $r_{1}$-Bell polynomial introduced
in \cite{mez1}. It is known that these polynomials have only real (negative) roots.
Assume, for $1\leq j\leq p,$ that the roots of the polynomial $B_{n}\left(  z;\mathbf{r}_{j}\right)$ are real and negative.
By these statements, the polynomial $z^{r_{p}}B_{n}\left( z;\mathbf{r}%
_{p}\right) $ has $n+\left\vert \mathbf{r}_{p-1}\right\vert +1$
non-positive real roots. So Rolle's theorem shows that the polynomial%
\begin{equation*}
B_{n}^{\left( 1\right) }\left( z;\mathbf{r}_{p}\right) =\exp \left(
-z\right) \frac{d}{dz}\left( z^{r_{p}}\exp \left( z\right) B_{n}\left( z;%
\mathbf{r}_{p}\right) \right) =z^{r_{p}-1}Q_{n,1}\left( z;\mathbf{r}%
_{p}\right)
\end{equation*}%
has $n+\left\vert \mathbf{r}_{p-1}\right\vert $ real (negative) roots and because $Q_{n,1}\left( z;%
\mathbf{r}_{p}\right) $ is of degree $n+\left\vert \mathbf{r}%
_{p-1}\right\vert +1,$ the last root may be necessarily real and negative.
This means that the polynomial $B_{n}^{\left( 1\right) }\left( z;\mathbf{r}%
_{p}\right) $ has $n+\left\vert \mathbf{r}_{p-1}\right\vert +1$
real (negative) roots and $z=0$ is a root of multiplicity $r_{p}-1.$
Similarly, Rolle's theorem shows that the polynomial%
\begin{equation*}
B_{n}^{\left( 2\right) }\left( z;\mathbf{r}_{p}\right) =\exp \left(
-z\right) \frac{d}{dz}\left( \exp \left( z\right) B_{n}^{\left( 1\right)
}\left( z;\mathbf{r}_{p}\right) \right) =z^{r_{p}-2}Q_{n,2}\left( z;\mathbf{r%
}_{p}\right)
\end{equation*}%
has $n+\left\vert \mathbf{r}_{p-1}\right\vert +1$ real (negative)
roots and because $%
Q_{n,2}\left( z;\mathbf{r}_{p}\right) $ is of degree $n+\left\vert
\mathbf{r}_{p-1}\right\vert +2,$ the last root may be necessarily real and
negative. This means that the polynomial $B_{n}^{\left( 2\right) }\left( z;%
\mathbf{r}_{p}\right) $ has $n+\left\vert \mathbf{r}_{p-1}\right\vert +2$
real (negative) roots and $z=0$ is a root of multiplicity $r_{p}-2.$%
We proceed similarly to conclude that, for $j\in \left\{ 0,\ldots
,r_{p}-1\right\} ,$ the polynomial $B_{n}^{\left( j\right) }\left( z;\mathbf{r%
}_{p}\right) $ has $n+\left\vert \mathbf{r}_{p-1}\right\vert +j$
real (negative) roots and $z=0$ is a root of multiplicity $r_{p}-j.$
For $r_{p}\leq j\leq r_{p+1},$ the polynomials $B_{n}^{\left( j\right) }\left( z;\mathbf{r}_{p}\right)
$ no have $z=0$ as a root. By applying Rolle's theorem on $B_{n}^{\left( r_{p}-1\right) }\left( z;\mathbf{r}%
_{p}\right) $ (which has $n+\left\vert \mathbf{r}_{p}\right\vert$ real non-positive roots) we conclude that $B_{n}^{\left( r_{p}\right) }\left( z;%
\mathbf{r}_{p}\right) $ has $n+\left\vert \mathbf{r}_{p}\right\vert -1$ real (negative) roots and because it is of degree $n+\left\vert \mathbf{r}%
_{p}\right\vert ,$ the last root may be necessarily real and negative. Similarly, we conclude that the
polynomial $B_{n}^{\left( r_{p}+1\right) }\left( z;\mathbf{r}_{p}\right) $
has $n+\left\vert \mathbf{r}_{p}\right\vert $ real (negative) roots and so on.
\end{proof}

\noindent On using Newton's inequality \cite[p. 52]{har} given by
\begin{theorem} (Newton's inequality) Let $a_{0},$ $a_{1},\ldots ,$ $a_{n}$ be real numbers.
If all the zeros of the polynomial $P(x)=\overset{n}{\underset{k=0}{\sum }}%
a_{i}x^{i}$ are real, then the coefficients of $P$ satisfy%
\begin{equation*}
a_{i}^{2}\geq \left( 1+\frac{1}{i}\right) \left( 1+\frac{1}{n-i}\right)
a_{i+1}a_{i-1},\ \ \ 1\leq i\leq n-1,
\end{equation*}
\end{theorem}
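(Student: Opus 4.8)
The plan is to reduce, through operations that preserve real-rootedness, to the case of a quadratic, where Newton's inequality is exactly the non-negativity of the discriminant. First I would dispose of degeneracies. We may assume $a_{n}\neq 0$ (otherwise discard the vanishing top coefficients and decrease $n$). Writing $P(x)=a_{n}\prod_{j=1}^{n}(x-\rho_{j})$ with all $\rho_{j}\in\mathbb{R}$, replace $P(x)$ by $P(x+\varepsilon)=\sum_{k=0}^{n}\frac{P^{(k)}(\varepsilon)}{k!}x^{k}$: for all but finitely many $\varepsilon$ this polynomial again has degree $n$, all its roots $\rho_{j}-\varepsilon$ are nonzero, and every coefficient $P^{(k)}(\varepsilon)/k!$ is nonzero (for $0\le k\le n$ it is a nonzero polynomial in $\varepsilon$). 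Since the coefficients of $P(x+\varepsilon)$ tend to those of $P(x)$ as $\varepsilon\to0$ and the inequality to be proved is a closed condition on the coefficients, it suffices to treat the case in which every coefficient and every root of $P$ is nonzero.

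\noindent Next I record the two operations. (i) Differentiation $P\mapsto P'$: by Rolle's theorem a polynomial of degree $m$ with only real roots is sent to one of degree $m-1$ with only real roots, and on coefficients $[x^{k}]P'=(k+1)\,[x^{k+1}]P$. (ii) Reversal $J_{m}\colon\ \sum_{k=0}^{m}c_{k}x^{k}\mapsto x^{m}P(1/x)=\sum_{k=0}^{m}c_{m-k}x^{k}$: when $c_{0}\neq0$ this sends a degree-$m$ polynomial with real roots $\rho_{j}$ to the degree-$m$ polynomial with real roots $1/\rho_{j}$, and it reverses the coefficient list. Fix $i$ with $1\le i\le n-1$. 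Apply to $P$, in order, $J_{n}$, then $D^{\,n-i-1}$ (yielding a real-rooted polynomial of degree exactly $i+1$ with nonzero constant term, whose three lowest coefficients are positive multiples of $a_{i+1},a_{i},a_{i-1}$), then $J_{i+1}$, then $D^{\,i-1}$. Keeping track of the factorial weights, the outcome is a real-rooted quadratic $\alpha x^{2}+\beta x+\gamma$ with
\[
\alpha=\tfrac{1}{2}(i+1)!\,(n-i-1)!\,a_{i+1},\qquad \beta=i!\,(n-i)!\,a_{i},\qquad \gamma=\tfrac{1}{2}(i-1)!\,(n-i+1)!\,a_{i-1}.
\]
Its discriminant is non-negative, $\beta^{2}\ge4\alpha\gamma$; dividing by $(i!)^{2}[(n-i)!]^{2}$ and using $\frac{(i+1)!(i-1)!}{(i!)^{2}}=1+\frac{1}{i}$ and $\frac{(n-i+1)!(n-i-1)!}{[(n-i)!]^{2}}=1+\frac{1}{n-i}$ yields precisely $a_{i}^{2}\ge\left(1+\frac{1}{i}\right)\left(1+\frac{1}{n-i}\right)a_{i+1}a_{i-1}$.

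\noindent The conceptual content is entirely the two elementary facts (i) and (ii); the step that needs care is the bookkeeping in the composite $D^{\,i-1}\circ J_{i+1}\circ D^{\,n-i-1}\circ J_{n}$. One must check that each reversal is applied to a polynomial of exactly the expected degree (this is why the non-vanishing of $a_{0}$ and of $a_{i+1}$ was arranged beforehand, so that $D^{\,n-i-1}(J_{n}P)$ has degree exactly $i+1$ with nonzero constant term, and so that the final quadratic really has degree $2$) and that the three surviving coefficients are the claimed positive multiples of $a_{i-1},a_{i},a_{i+1}$ with the stated factorial constants. Once those constants are confirmed the discriminant inequality is the whole story. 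I expect this degree/weight bookkeeping to be the only real obstacle; an alternative, should it turn out cleaner, is an induction on $n$ exploiting that both $P'$ and $J_{n}P$ are real-rooted, but the reduction above is more direct.
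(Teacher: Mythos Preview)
Your argument is correct: the reduction via reversal and repeated differentiation to a real-rooted quadratic is the classical proof, and your factorial bookkeeping checks out (I verified that $D^{\,i-1}\circ J_{i+1}\circ D^{\,n-i-1}\circ J_{n}$ applied to $P$ yields exactly the quadratic with coefficients $\alpha,\beta,\gamma$ you wrote down, so $\beta^{2}\ge 4\alpha\gamma$ gives the claim). The perturbation step and the degree reduction for $a_{n}=0$ are also handled properly; in the latter case the inequality with the smaller degree is stronger for $i$ below that degree and trivial above it.

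There is nothing to compare against, however: the paper does not prove Newton's inequality. It merely states the theorem with a reference to Hardy--Littlewood--P\'olya \cite{har} and then applies it to conclude strong log-concavity of the $\mathbf{r}_{p}$-Stirling sequence. Your write-up is essentially the textbook proof one would find by following that citation.
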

\noindent we may state that:
\begin{corollary}
The sequence $\left\{{ n+\left\vert \mathbf{r}_{p}\right\vert \brace k+r_{p} }
_{\!\!\mathbf{r}_{p}},\ 0\leq k\leq n+\left\vert \mathbf{r}_{p-1}\right\vert
\right\}  $ is strongly log-concave (and thus unimodal).
\end{corollary}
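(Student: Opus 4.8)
The plan is to derive the corollary directly from Theorem~\ref{T1} via Newton's inequality, with essentially no extra work. First I would set $N:=n+\left\vert \mathbf{r}_{p-1}\right\vert$ and write
\[
B_{n}\left( z;\mathbf{r}_{p}\right) =\sum_{k=0}^{N}a_{k}z^{k},\qquad
a_{k}:={ n+\left\vert \mathbf{r}_{p}\right\vert \brace k+r_{p} }_{\!\!\mathbf{r}_{p}} .
\]
By Theorem~\ref{T1} all zeros of this polynomial are real, so Newton's inequality applies with its index $n$ matched to the degree $N$, giving
\[
a_{i}^{2}\geq\left( 1+\tfrac{1}{i}\right) \left( 1+\tfrac{1}{N-i}\right) a_{i+1}a_{i-1},\qquad 1\leq i\leq N-1 .
\]
Since the factor $\left( 1+\tfrac{1}{i}\right) \left( 1+\tfrac{1}{N-i}\right)$ is strictly larger than $1$ on that range, this forces $a_{i}^{2}>a_{i+1}a_{i-1}$, which is exactly strong log-concavity of the sequence.

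Next I would observe that every $a_{k}$ is a positive integer, being the number of partitions of $\left[ n+\left\vert \mathbf{r}_{p}\right\vert\right]$ of the prescribed type; hence the support of the sequence is the full interval $\{0,1,\ldots,N\}$ with no internal zeros. A log-concave sequence of strictly positive terms with no internal zeros is automatically unimodal, so the parenthetical claim follows immediately once strong log-concavity is established.

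The only point requiring a little care is the bookkeeping in the application of Newton's inequality: one must be sure to match its ``$n$'' with the degree $N=n+\left\vert \mathbf{r}_{p-1}\right\vert$ of $B_{n}\left( z;\mathbf{r}_{p}\right)$ rather than with the parameter $n$ of the Bell polynomial, and to note that the boundary indices $i=0$ and $i=N$ are excluded from the strict inequality (they hold trivially since $a_{-1}=a_{N+1}=0$). There is no substantive obstacle here — the real content is Theorem~\ref{T1}, and this corollary is just the standard root-reality $\Rightarrow$ log-concavity transfer.
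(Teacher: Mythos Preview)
Your proposal is correct and follows exactly the paper's approach: the corollary is presented there as an immediate consequence of Theorem~\ref{T1} together with Newton's inequality, with no additional argument given. Your write-up merely spells out the bookkeeping (matching the degree $N=n+\left\vert \mathbf{r}_{p-1}\right\vert$, noting positivity of the coefficients to pass from log-concavity to unimodality), which the paper leaves implicit.
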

\noindent This property shows that the sequence $({ n \brace k }
_{\!\!\mathbf{r}_{p}}, 0\leq k \leq n)$  admits an index $K\in \{0,1,\ldots,n\}$ for which
${ n \brace K }_{\!\!\mathbf{r}_{p}}$ being the maximum of ${ n \brace k }_{\!\!\mathbf{r}_{p}}$.
An application of Darroch's inequality \cite{dar} will help us to localize this index.
\begin{theorem} (Darroch's inequality) Let $a_{0},$ $a_{1},\ldots ,$ $a_{n}$ be real numbers.
If all the zeros of the polynomial $P(x)=\overset{n}{\underset{k=0}{\sum }}%
a_{i}x^{i}$ are real and negative and $P(1) > 0,$ then the value of $k$ for which $a_{k}$ is maximized is within
one of $P'(1)/P(1)$.
\end{theorem}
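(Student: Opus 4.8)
The plan is to translate the statement into probability. Write $P(x)=a_{n}\prod_{i=1}^{n}(x+\lambda_{i})$; since every zero $-\lambda_{i}$ is real and negative we have $\lambda_{i}>0$, and $P(1)>0$ forces $a_{n}>0$, hence every $a_{k}>0$. Setting $p_{i}:=(1+\lambda_{i})^{-1}\in(0,1)$ we obtain
\[
\frac{P(x)}{P(1)}=\prod_{i=1}^{n}\frac{x+\lambda_{i}}{1+\lambda_{i}}=\prod_{i=1}^{n}\bigl((1-p_{i})+p_{i}x\bigr)=\sum_{k=0}^{n}\frac{a_{k}}{P(1)}\,x^{k},
\]
so $c_{k}:=a_{k}/P(1)$ equals $\Pr(X=k)$, where $X=B_{1}+\cdots+B_{n}$ is a sum of independent Bernoulli variables with $\Pr(B_{i}=1)=p_{i}$. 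Differentiating the product at $x=1$ identifies the mean: $\mathbb{E}[X]=\sum_{i}p_{i}=P'(1)/P(1)=:\mu$. Since the index maximizing $a_{k}$ is a mode of $X$, the theorem is equivalent to the assertion that every mode $m$ of such an $X$ satisfies $\lfloor\mu\rfloor\le m\le\lceil\mu\rceil$, hence lies within one of $\mu$.

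To prove this I would use exponential tilting together with the single special case of an integer mean. For $\theta\in\mathbb{R}$ the tilted law $\Pr_{\theta}(X=k)\propto c_{k}e^{\theta k}$ is again the law of a sum of independent indicators, with parameters $p_{i}e^{\theta}/(1-p_{i}+p_{i}e^{\theta})$, and its mean $\mu(\theta)=\frac{d}{d\theta}\log P(e^{\theta})$ is continuous and strictly increasing, taking every value of the open interval $(0,n)$ as $\theta$ ranges over $\mathbb{R}$, with $\mu(0)=\mu$. Granting the \emph{integer-mean case}---a sum of independent indicators whose mean is an integer $N$ has $N$ among its modes---one argues as follows. If $\mu\in(N,N+1)$, choose $\theta_{-}<0$ with $\mu(\theta_{-})=N$; then $\Pr_{\theta_{-}}(X=N)\ge\Pr_{\theta_{-}}(X=j)$ for all $j$ rewrites as $c_{j}\le c_{N}e^{\theta_{-}(N-j)}$, which for $j<N$ gives $c_{j}<c_{N}$, so no index below $N$ is a mode of $X$; symmetrically, tilting to mean $N+1$ shows no index above $N+1$ is a mode, so every mode lies in $\{N,N+1\}\subseteq[\lfloor\mu\rfloor,\lceil\mu\rceil]$. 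When $\mu$ is itself an integer, or $N=0$, or $N=n-1$, the argument degenerates but still goes through, using that $X$ takes values in $\{0,\dots,n\}$. As a sanity check, the strict log-concavity of $(c_{k})$---Newton's inequality applied to the real-rooted $P$, the same tool invoked for the Corollary above---confirms unimodality and shows there can be at most two (consecutive) modes.

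The main obstacle is therefore exactly the integer-mean case: if $\mathbb{E}[X]=N\in\mathbb{Z}$ then $c_{N}\ge c_{N-1}$ and $c_{N}\ge c_{N+1}$. By the reflection $X\mapsto n-X$, which is again a sum of independent indicators and has integer mean $n-N$, it suffices to prove $c_{N}\ge c_{N+1}$. I would attack this either by a direct inequality among the elementary symmetric functions of $p_{1},\dots,p_{n}$, or---more transparently---by showing that $c_{N}-c_{N+1}$, viewed as a function of $(p_{1},\dots,p_{n})$ on the slice $\{\sum p_{i}=N\}$, attains its minimum at the balanced point $p_{i}=N/n$ (a Schur-convexity computation via the Schur--Ostrowski criterion), where $X$ is $\mathrm{Binomial}(n,N/n)$ and $c_{N}\ge c_{N+1}$ is the classical fact that a binomial distribution with integer mean is modal at its mean. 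An alternative that sidesteps tilting is to prove the two-sided monotonicity ``$(c_{k})$ nondecreasing for $k\le\mu$ and nonincreasing for $k\ge\mu$'' directly by induction on $n$, adjoining one Bernoulli at a time; the delicate step there---the one where the running mean crosses an integer---is again the true content, and is precisely where strict log-concavity is needed to break ties. Since the theorem is classical (this is essentially Darroch's own argument \cite{dar}) and is invoked here only as a tool, one could also simply cite it at this point.
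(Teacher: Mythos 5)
The paper does not prove this statement at all: it quotes Darroch's inequality as a known classical theorem and cites Darroch's 1964 paper \cite{dar}, exactly as you suggest one could do in your closing sentence. So there is no in-paper argument to compare against, and your proposal has to be judged as a standalone proof.

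As such, it is a sound \emph{plan} with one genuine gap. The setup is correct and is essentially Darroch's own framework: factoring $P$, noting $a_n>0$ and hence $a_k>0$ for all $k$, identifying $a_k/P(1)$ with $\Pr(X=k)$ for a sum $X$ of independent Bernoulli variables with parameters $p_i=(1+\lambda_i)^{-1}$, and computing $\mathbb{E}[X]=P'(1)/P(1)$. The exponential-tilting reduction is also correct: tilting preserves the Bernoulli-sum structure, $\mu(\theta)$ sweeps $(0,n)$ monotonically, and the inequality $c_j\le c_N e^{\theta_-(N-j)}$ with $\theta_-<0$ does rule out modes below $N$ (with the boundary cases $N=0$ and $N+1=n$ handled vacuously, as you note). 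The gap is the integer-mean lemma itself, which you correctly flag as ``the true content'' but never establish. You name two candidate attacks --- a Schur-convexity argument for $c_N-c_{N+1}$ on the slice $\{\sum p_i=N\}$, and an induction adjoining one Bernoulli at a time --- but neither is carried out, and the Schur--Ostrowski verification in the first (that the minimum really occurs at the balanced point) is a nontrivial computation asserted without proof. Until one of these is completed, the proof is not self-contained. Since the theorem is invoked in the paper purely as a tool (to locate the maximizing index of the $\mathbf{r}_p$-Stirling numbers), the pragmatic resolution is the one both you and the authors reach: cite \cite{dar}. If you want a self-contained argument, completing the integer-mean case (or replacing the whole scheme by Darroch's original direct manipulation of the ratios $c_{k+1}/c_k$, which avoids tilting) is the one missing piece.
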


\noindent The following corollary gives a small interval for this index.

\begin{corollary}
Let $K_{n,\mathbf{r}_{p}}$ be the greatest maximizing index of ${ n \brace k }
_{\!\!\mathbf{r}_{p}}.$  We have%
\[
\left\vert K_{n+\left\vert \mathbf{r}_{p}\right\vert ,\mathbf{r}_{p}}-\left(
\frac{B_{n+1}\left(  1;\mathbf{r}_{p}\right)  }{B_{n}\left(  1;\mathbf{r}%
_{p}\right)  }-\left(  r_{p}+1\right)  \right)  \right\vert <1.
\]

\end{corollary}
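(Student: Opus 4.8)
The plan is to apply Darroch's inequality to the polynomial $B_{n}(z;\mathbf{r}_{p})$ and then use a first-order recurrence for these polynomials to replace the logarithmic-derivative term $B_{n}'(1;\mathbf{r}_{p})/B_{n}(1;\mathbf{r}_{p})$ by the ratio $B_{n+1}(1;\mathbf{r}_{p})/B_{n}(1;\mathbf{r}_{p})$. First I would collect the elementary facts that make Darroch's inequality applicable: the coefficient of $z^{k}$ in $B_{n}(z;\mathbf{r}_{p})$ is ${ n+|\mathbf{r}_{p}| \brace k+r_{p} }_{\!\!\mathbf{r}_{p}}$; the numbers ${ n+|\mathbf{r}_{p}| \brace m }_{\!\!\mathbf{r}_{p}}$ are nonnegative integers that vanish for $m<r_{p}$ (an element of $R_{p}$ already requires $r_{p}$ distinct blocks), with ${ n+|\mathbf{r}_{p}| \brace r_{p} }_{\!\!\mathbf{r}_{p}}>0$ and ${ n+|\mathbf{r}_{p}| \brace n+|\mathbf{r}_{p}| }_{\!\!\mathbf{r}_{p}}=1$; hence $B_{n}(z;\mathbf{r}_{p})$ is a genuine polynomial of degree $n+|\mathbf{r}_{p-1}|$ with positive constant term, so $B_{n}(1;\mathbf{r}_{p})>0$, and the greatest maximizing index $K_{n+|\mathbf{r}_{p}|,\mathbf{r}_{p}}$ is precisely the greatest index at which the coefficient sequence of $B_{n}(z;\mathbf{r}_{p})$ attains its maximum. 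By Theorem~\ref{T1} all zeros of $B_{n}(z;\mathbf{r}_{p})$ are real and negative, so the hypotheses of Darroch's inequality are met for $P(z)=B_{n}(z;\mathbf{r}_{p})$.

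Second I would prove the recurrence
\[
B_{n+1}(z;\mathbf{r}_{p})=(z+r_{p})\,B_{n}(z;\mathbf{r}_{p})+z\,B_{n}'(z;\mathbf{r}_{p}),
\]
which follows directly from (\ref{9}): by the definition of $P_{t}(z;\mathbf{r}_{p})$ we have $P_{n+1}(z;\mathbf{r}_{p})=(z+r_{p})\,P_{n}(z;\mathbf{r}_{p})$, so setting $g_{n}(z):=e^{z}B_{n}(z;\mathbf{r}_{p})=\sum_{k\geq 0}P_{n}(k;\mathbf{r}_{p})z^{k}/k!$ and noting that $z\,g_{n}'(z)=\sum_{k\geq 0}k\,P_{n}(k;\mathbf{r}_{p})z^{k}/k!$ gives $g_{n+1}(z)=z\,g_{n}'(z)+r_{p}\,g_{n}(z)$, which is the stated identity after dividing by $e^{z}$. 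Evaluating it at $z=1$ yields $B_{n}'(1;\mathbf{r}_{p})=B_{n+1}(1;\mathbf{r}_{p})-(r_{p}+1)B_{n}(1;\mathbf{r}_{p})$, hence
\[
\frac{B_{n}'(1;\mathbf{r}_{p})}{B_{n}(1;\mathbf{r}_{p})}=\frac{B_{n+1}(1;\mathbf{r}_{p})}{B_{n}(1;\mathbf{r}_{p})}-(r_{p}+1).
\]

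Finally I would invoke Darroch's inequality for $P(z)=B_{n}(z;\mathbf{r}_{p})$: since $P(1)=B_{n}(1;\mathbf{r}_{p})>0$ and all zeros of $P$ are real and negative, the mode of the coefficient sequence of $B_{n}(z;\mathbf{r}_{p})$ — which, by the first paragraph, is $K_{n+|\mathbf{r}_{p}|,\mathbf{r}_{p}}$ — lies within one of $P'(1)/P(1)=B_{n}'(1;\mathbf{r}_{p})/B_{n}(1;\mathbf{r}_{p})$; substituting the identity from the second step turns this into the asserted inequality. I do not expect any substantive obstacle here, as the two key inputs, Theorem~\ref{T1} and the recurrence, are both in hand; the one point that genuinely needs care is the bookkeeping of the shift by $r_{p}$ between the exponent of $z$ in $B_{n}(z;\mathbf{r}_{p})$ and the second entry of the Stirling symbol, since it is exactly this, together with the value of the recurrence at $z=1$, that pins the centering constant down as $\frac{B_{n+1}(1;\mathbf{r}_{p})}{B_{n}(1;\mathbf{r}_{p})}-(r_{p}+1)$.
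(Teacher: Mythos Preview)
Your proposal is correct and follows essentially the same route as the paper: apply Darroch's inequality to $B_{n}(z;\mathbf{r}_{p})$ using Theorem~\ref{T1}, then convert $B_{n}'(1;\mathbf{r}_{p})/B_{n}(1;\mathbf{r}_{p})$ via the recurrence $zB_{n}'(z;\mathbf{r}_{p})=B_{n+1}(z;\mathbf{r}_{p})-(z+r_{p})B_{n}(z;\mathbf{r}_{p})$. The only minor difference is that you re-derive this recurrence directly from (\ref{9}), whereas the paper quotes it from \cite[Corollary~12]{mih1}.
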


\begin{proof}
Since the sequence ${ n+\left\vert \mathbf{r}_{p}\right\vert \brace k+r_{p} }
_{\!\!\mathbf{r}_{p}}$ is strongly log-concave, there exists an index
$K_{n+\left\vert \mathbf{r}_{p}\right\vert ,\mathbf{r}_{p}}$ for which \
${ n+\left\vert \mathbf{r}_{p}\right\vert \brace r_{p} }
_{\!\!\mathbf{r}_{p}}<\cdots<{ n+\left\vert \mathbf{r}_{p}
\right\vert \brace K_{n+\left\vert \mathbf{r}_{p}\right\vert ,\mathbf{r}_{p}} }
_{\!\!\mathbf{r}_{p}}>\cdots>{ n+\left\vert \mathbf{r}_{p}\right\vert \brace n+r_{p} }
_{\!\!\mathbf{r}_{p}}.$ Then, on applying Theorem \ref{T1} and Darroch's theorem, we obtain%
\[
\left\vert K_{n+\left\vert \mathbf{r}_{p}\right\vert ,\mathbf{r}_{p}}%
-\frac{\left.  \frac{d}{dz}B_{n}\left(  z;\mathbf{r}_{p}\right)  \right\vert
_{z=1}}{B_{n}\left(  1;\mathbf{r}_{p}\right)  }\right\vert <1.
\]
It remains to apply the first identity given in \cite[Corollary 12]{mih1} by%
\begin{align*}
z\frac{d}{dz}\left(  z^{r_{p}}\exp\left(  z\right)  B_{n}\left(
z;\mathbf{r}_{p}\right)  \right)   &  =z^{r_{p}}\exp\left(  z\right)
B_{n+1}\left(  z;\mathbf{r}_{p}\right)  ,
\end{align*}
which is equivalent to $z\frac{d}{dz}\left(  B_{n}\left(  z;\mathbf{r}_{p}\right)  \right)
=B_{n+1}\left(  z;\mathbf{r}_{p}\right)  -\left(  z+r_{p}\right)  B_{n}\left(
z;\mathbf{r}_{p}\right)  .$
\end{proof}

\section{Generalized recurrences and consequences}
In this section, different representations of the polynomial $B_{n}\left(  z;\mathbf{r}_{p}\right)  $  in different bases or families of basis are given by Theorems \ref{T3} and \ref{T6}. Indeed, this polynomial admits a representation in the basis $\left\{  B_{n+k}\left(  z;r_{p}\right)  :\ 0\leq k\leq n+\left\vert \mathbf{r}_{p-1}\right\vert \right\}$ given by the following theorem.
\begin{theorem}
\label{T3} We have%
\begin{align*}
B_{n}\left(  z;\mathbf{r}_{p}\right) & =\underset{k=0}{\overset{\left\vert
\mathbf{r}_{p-1}\right\vert }{\sum}}a_{k}\left(  \mathbf{r}_{p-1}\right)
B_{n+k}\left(  z;r_{p}\right) , \\
B_{n}\left(  z;\mathbf{r}_{p+q}\right) & =\underset{k=0}{\overset{\left\vert
\mathbf{r}_{p-1}\right\vert }{\sum}}a_{k}\left(  \mathbf{r}_{p-1}\right)
B_{n+k}\left(  z;r_{p},\ldots,r_{p+q}\right)  .
\end{align*}

\end{theorem}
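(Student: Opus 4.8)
The plan is to push everything down to the level of the auxiliary polynomials $P_n$ and the exponential-type formula (\ref{9}), using the observation that in $P_t\left(z;\mathbf{r}_p\right)=\left(z+r_p\right)^t\left(z+r_p\right)^{\underline{r_1}}\cdots\left(z+r_p\right)^{\underline{r_{p-1}}}$ the first $p-1$ parameters enter only through the product of falling factorials. I would prove the second (more general) identity directly, the first being its specialization to $q=0$.

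Fix $q\geq 0$ and abbreviate $s=p+q$; note that $\left(r_p,\ldots,r_s\right)$ is again a nondecreasing vector, being a tail of $r_1\leq\cdots\leq r_s$, so all the identities from the introduction apply to it. Applying (\ref{9}) to $\mathbf{r}_s$ gives
\[
B_n\left(z;\mathbf{r}_s\right)=\exp(-z)\sum_{k\geq 0}P_n\left(k;\mathbf{r}_s\right)\frac{z^k}{k!},
\]
so the crux is to decompose $P_n\left(k;\mathbf{r}_s\right)$. By definition,
\[
P_n\left(k;\mathbf{r}_s\right)=\left(k+r_s\right)^n\left(k+r_s\right)^{\underline{r_1}}\cdots\left(k+r_s\right)^{\underline{r_{s-1}}}.
\]
Setting $u=k+r_s$ in (\ref{10}) applied to the parameter vector $\mathbf{r}_{p-1}$ turns the first $p-1$ falling factorials into $\left(k+r_s\right)^{\underline{r_1}}\cdots\left(k+r_s\right)^{\underline{r_{p-1}}}=\sum_{j=0}^{\left|\mathbf{r}_{p-1}\right|}a_j\left(\mathbf{r}_{p-1}\right)\left(k+r_s\right)^j$. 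Reinserting the factor $\left(k+r_s\right)^n$ together with the remaining falling factorials $\left(k+r_s\right)^{\underline{r_p}}\cdots\left(k+r_s\right)^{\underline{r_{s-1}}}$, and observing that for the truncated vector $\left(r_p,\ldots,r_s\right)$ — whose last entry is $r_s$ and whose first $q$ entries are $r_p,\ldots,r_{s-1}$ — one has $P_{n+j}\left(k;r_p,\ldots,r_s\right)=\left(k+r_s\right)^{n+j}\left(k+r_s\right)^{\underline{r_p}}\cdots\left(k+r_s\right)^{\underline{r_{s-1}}}$, we arrive at
\[
P_n\left(k;\mathbf{r}_s\right)=\sum_{j=0}^{\left|\mathbf{r}_{p-1}\right|}a_j\left(\mathbf{r}_{p-1}\right)P_{n+j}\left(k;r_p,\ldots,r_s\right).
\]

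It then remains to substitute this back into the display above, interchange the finite $j$-sum with the sum over $k$, and apply (\ref{9}) once more — this time to the vector $\left(r_p,\ldots,r_s\right)$ with index $n+j$ — which yields
\[
B_n\left(z;\mathbf{r}_{p+q}\right)=\sum_{j=0}^{\left|\mathbf{r}_{p-1}\right|}a_j\left(\mathbf{r}_{p-1}\right)B_{n+j}\left(z;r_p,\ldots,r_{p+q}\right).
\]
Taking $q=0$, where $\left(r_p,\ldots,r_s\right)=\left(r_p\right)$ and $P_t\left(z;r_p\right)=\left(z+r_p\right)^t$, gives the first identity. As a consistency check, $a_{\left|\mathbf{r}_{p-1}\right|}\left(\mathbf{r}_{p-1}\right)=1$ while $B_{n+\left|\mathbf{r}_{p-1}\right|}\left(z;r_p,\ldots,r_{p+q}\right)$ has degree $n+\left|\mathbf{r}_{p+q-1}\right|$, which is exactly the degree of $B_n\left(z;\mathbf{r}_{p+q}\right)$.

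I do not expect any real difficulty: the argument is entirely driven by the bookkeeping identity (\ref{10}) together with a careful reading of the definition of $P_t$. The single step deserving attention is the identification $\left(k+r_s\right)^{n+j}\left(k+r_s\right)^{\underline{r_p}}\cdots\left(k+r_s\right)^{\underline{r_{s-1}}}=P_{n+j}\left(k;r_p,\ldots,r_s\right)$, i.e. checking that peeling off the first $p-1$ falling factorials leaves precisely the $P$-polynomial attached to the truncated parameter vector; this is immediate from the definition, but it is the structural fact on which the whole reduction rests.
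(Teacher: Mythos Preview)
Your argument is correct. For the first identity it is essentially the paper's proof: both feed $B_n(z;\mathbf{r}_p)$ through the Dobinski-type formula (\ref{9}) and expand the falling factorials $(k+r_p)^{\underline{r_1}}\cdots(k+r_p)^{\underline{r_{p-1}}}$ in powers of $k+r_p$; the only difference is that the paper peels off one factor $(k+r_p)^{\underline{r_m}}$ at a time and regroups at the end, whereas you invoke (\ref{10}) once to do all $p-1$ factors simultaneously.

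Where you genuinely diverge is in the second identity. The paper derives it from the first by applying the differential operator of (\ref{2}) --- namely $B_n(z;\mathbf{r}_{p+1})=\exp(-z)\frac{d^{r_{p+1}}}{dz^{r_{p+1}}}\bigl(z^{r_p}\exp(z)B_n(z;\mathbf{r}_p)\bigr)$ --- to both sides and iterating $q$ times. You instead observe that the very same $P$-level factorization works uniformly for the longer vector $\mathbf{r}_{p+q}$, because the first $p-1$ falling factorials split off from $P_n(k;\mathbf{r}_{p+q})$ leaving exactly $P_{n+j}(k;r_p,\ldots,r_{p+q})$. Your route is shorter and avoids the auxiliary differential identity (\ref{2}) altogether; the paper's route, on the other hand, isolates the intermediate relation $B_n(z;\mathbf{r}_p)=\sum_j(-1)^{r_m-j}\genfrac{[}{]}{0pt}{}{r_m}{j}B_{n+j}(z;\mathbf{r}_p-r_m\mathbf{e}_m)$, which has some independent interest.
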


\begin{proof}
Upon using the fact that
$\left(  k+r_{p}\right)  ^{\underline{r_{m}}}=\underset{j=0}{\overset{r_{m}%
}{\sum}}\left(  -1\right)  ^{r_{m}-j} \genfrac{[}{]}{0pt}{}{r_{m}}{j}\left(  k+r_{p}\right)  ^{j}, \label{cc}$
we get%
\begin{align*}
B_{n}\left(  z;\mathbf{r}_{p}\right)   &  =\exp\left(  -z\right)
\underset{k\geq0}{\sum}P_{n}\left(  k;\mathbf{r}_{p}\right)  \frac{z^{k}}%
{k!}\\
&  =\exp\left(  -z\right)  \underset{k\geq0}{\sum}\underset{j=0}%
{\overset{r_{m}}{\sum}}\left(  -1\right)  ^{r_{m}-j}\genfrac{[}{]}{0pt}{}{r_{m}}{j}%
\frac{P_{0}\left(  k;\mathbf{r}_{p}\right)  }{\left(  k+r_{p}\right)
^{\underline{r_{m}}}}\left(  k+r_{p}\right)  ^{n+j}\frac{z^{k}}{k!}\\
&  =\underset{j=0}{\overset{r_{m}}{\sum}}\left(  -1\right)  ^{r_{m}-j}%
\genfrac{[}{]}{0pt}{}{r_{m}}{j}%
B_{n+j}\left(  z;\mathbf{r}_{p}-r_{m}\mathbf{e}_{m}\right)
,\ \ \ m=1,2,\ldots,p-1,
\end{align*}
and with the same process, we obtain
\begin{align*}
B_{n}\left(  z;\mathbf{r}_{p}\right) & =\underset{j_{1}=0}{\overset{r_{1}}{\sum}}\cdots\underset{j_{p-1}%
=0}{\overset{r_{p-1}}{\sum}}\left(  -1\right)  ^{\left\vert \mathbf{r}%
_{p-1}\right\vert -\left\vert \mathbf{j}_{p-1}\right\vert }%
\genfrac{[}{]}{0pt}{}{r_{1}}{j_{1}}\cdots%
\genfrac{[}{]}{0pt}{}{r_{p-1}}{j_{p-1}}%
B_{n+\left\vert \mathbf{j}_{p-1}\right\vert }\left(  z;r_{p}\right)
\\
&=\underset{k=0}{\overset{\left\vert \mathbf{r}_{p-1}\right\vert }{\sum}%
}\left(  -1\right)  ^{\left\vert \mathbf{r}_{p-1}\right\vert -k}B_{n+k}\left(
z;r_{p}\right)  \underset{\left\vert \mathbf{j}_{p-1}\right\vert =k}{\sum}%
\genfrac{[}{]}{0pt}{}{r_{1}}{j_{1}}\cdots%
\genfrac{[}{]}{0pt}{}{r_{p-1}}{j_{p-1}}%
\\
&  =\underset{k=0}{\overset{\left\vert \mathbf{r}_{p-1}\right\vert }{\sum}%
}a_{k}\left(  \mathbf{r}_{p-1}\right)  B_{n+k}\left(  z;r_{p}\right)  .
\end{align*}
This implies the first identity of the theorem.
\\ Now, from identity (\ref{2})  we can write
\begin{align*}
\exp \left( -z\right) \frac{d^{r_{p+1}}}{dz^{r_{p+1}}}\left( z^{r_{p}}\exp
\left( z\right) B_{n}\left( z;\mathbf{r}_{p}\right) \right) \\ =\underset{k=0}{%
\overset{\left\vert \mathbf{r}_{p-1}\right\vert }{\sum }}a_{k}\left( \mathbf{%
r}_{p-1}\right) \exp \left( -z\right) \frac{d^{r_{p+1}}}{dz^{r_{p+1}}}\left(
z^{r_{p}}\exp \left( z\right) B_{n+k}\left( z;r_{p}\right) \right) ,
\end{align*}%
which gives on utilizing (\ref{2}):
$B_{n}\left( z;\mathbf{r}_{p+1}\right) =\underset{k=0}{\overset{\left\vert
\mathbf{r}_{p-1}\right\vert }{\sum }}a_{k}\left( \mathbf{r}_{p-1}\right)
B_{n+k}\left( z;r_{p},r_{p+1}\right).$
\\ We can repeat this process $q$ times to obtain the second identity of the theorem.
\end{proof}

\noindent So, the $\left(  r_{1},\ldots,r_{p}\right)  $-Stirling numbers admit an expression in terms of the usual $r$-Stirling numbers given by the following corollary.
\begin{corollary}
\label{C1}We have%
\[%
{ n+\left\vert \mathbf{r}_{p}\right\vert \brace k+r_{p} }
_{\!\!\mathbf{r}_{p}}=\underset{j=0}{\overset{\left\vert \mathbf{r}_{p-1}%
\right\vert }{\sum}}%
{ n+j+r_{p} \brace k+r_{p} }
_{\!\!{r}_{p}}a_{j}\left(  \mathbf{r}_{p-1}\right)  .
\]

\end{corollary}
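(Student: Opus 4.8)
The plan is simply to read off the identity by extracting the coefficient of $z^{k}$ from the first identity of Theorem~\ref{T3}. First I would recall the definition of $B_{n}$ from the introduction,
$$B_{n}\left(z;\mathbf{r}_{p}\right)=\sum_{k=0}^{n+\left\vert\mathbf{r}_{p-1}\right\vert}{n+\left\vert\mathbf{r}_{p}\right\vert\brace k+r_{p}}_{\!\!\mathbf{r}_{p}}z^{k},$$
and specialize it to a single parameter $r_{p}$ (so that $p$ is replaced by $1$ and $\left\vert\mathbf{r}_{0}\right\vert=0$), which gives, for each $j\geq0$,
$$B_{n+j}\left(z;r_{p}\right)=\sum_{k\geq0}{n+j+r_{p}\brace k+r_{p}}_{\!\!r_{p}}z^{k}.$$

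Then I would substitute both expansions into the first identity of Theorem~\ref{T3}, namely $B_{n}\left(z;\mathbf{r}_{p}\right)=\sum_{j=0}^{\left\vert\mathbf{r}_{p-1}\right\vert}a_{j}\left(\mathbf{r}_{p-1}\right)B_{n+j}\left(z;r_{p}\right)$, interchange the two finite sums on the right-hand side, and compare the coefficient of $z^{k}$ on both sides; this produces the claimed formula at once. There is essentially no obstacle here: the only point that needs a word of care is the bookkeeping of the summation ranges — on the left $k$ runs up to $n+\left\vert\mathbf{r}_{p-1}\right\vert$, whereas in the inner sum on the right it runs only up to $n+j\leq n+\left\vert\mathbf{r}_{p-1}\right\vert$ — but this is harmless under the standard convention that ${a\brace b}_{\!\!r_{p}}=0$ whenever $b>a$, so that the extra terms contribute nothing and the term-by-term comparison of coefficients is legitimate. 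In effect the corollary is just the ``coefficientwise shadow'' of Theorem~\ref{T3}, in the same way that the pair of identities \eqref{9}--\eqref{7} package the polynomial-level and the coefficient-level statements together.
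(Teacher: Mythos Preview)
Your proposal is correct and follows essentially the same approach as the paper's own proof: both expand each side via the definition of the $\mathbf{r}_{p}$-Bell polynomials, invoke the first identity of Theorem~\ref{T3}, interchange the two finite sums, and identify coefficients of $z^{k}$. Your remark about the summation ranges and the vanishing convention for ${a\brace b}_{\!r_{p}}$ when $b>a$ makes explicit a point the paper leaves implicit, but otherwise the arguments coincide.
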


\begin{proof}
Using Theorem \ref{T3}, the polynomial $B_{n}\left(  z;\mathbf{r}_{p}\right)$ can be written as follows:
\begin{align*}
\underset{j=0}{\overset{\left\vert \mathbf{r}_{p-1}\right\vert }{\sum}}a_{j}\left(  \mathbf{r}_{p-1}\right)
B_{n+j}\left(  z;r_{p}\right) &  =\underset{j=0}{\overset{\left\vert \mathbf{r}_{p-1}\right\vert }{\sum}%
}a_{j}\left(  \mathbf{r}_{p-1}\right)  \underset{k=0}{\overset{n+j}{\sum}}%
{ n+j+r_{p} \brace k+r_{p} }_{\!\!{r}_{p}}z^{k}\\
&  =\underset{k=0}{\overset{n+\left\vert \mathbf{r}_{p-1}\right\vert }{\sum}%
}z^{k}\underset{j=0}{\overset{\left\vert \mathbf{r}_{p-1}\right\vert }{\sum}%
}a_{j}\left(  \mathbf{r}_{p-1}\right)
{ n+j+r_{p} \brace k+r_{p} }_{\!\!{r}_{p}}%
\end{align*}
and because $B_{n}\left(  z;\mathbf{r}_{p}\right)  =\underset{k=0}%
{\overset{n+\left\vert \mathbf{r}_{p-1}\right\vert }{\sum}}%
{ n+\left\vert \mathbf{r}_{p}\right\vert \brace k+r_{p} }
_{\!\!\mathbf{r}_{p}}z^{k},$ the identity follows by identification.
\end{proof}

\noindent In \cite{mih1}, we have proved the following
\[
\underset{n\geq0}{\sum}B_{n}\left(  z;\mathbf{r}_{p}\right)  \frac{t^{n}%
}{n!}=B_{0}\left(  z\exp\left(  t\right)  ;\mathbf{r}_{p}\right)  \exp\left(
z\left(  \exp\left(  t\right)  -1\right)  +r_{p}t\right) .
\]
The following theorem gives more details on the exponential generating function of the $\mathbf{r}_{p}$-Bell polynomials and will be used later.
\begin{theorem}
\label{T4}We have%
\begin{align*}
\underset{n\geq0}{\sum}B_{n+m}\left(  z;\mathbf{r}_{p}\right)  \frac{t^{n}%
}{n!}&=B_{m}\left(  z\exp\left(  t\right)  ;\mathbf{r}_{p}\right)  \exp\left(
z\left(  \exp\left(  t\right)  -1\right)  +r_{p}t\right) \\
&=\underset{k=0}{\overset{\left\vert
\mathbf{r}_{p-1}\right\vert }{\sum}}a_{k}\left(  \mathbf{r}_{p-1}\right)
\frac{d^{m+k}}{dt^{m+k}}\left(  \exp\left(  z\left(  \exp\left(  t\right)
-1\right)  +r_{p}t\right)  \right)  .
\end{align*}
\end{theorem}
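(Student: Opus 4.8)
The plan is to read the first equality off the Dobi\'{n}ski-type expansion (\ref{9}) by a direct resummation, and then to obtain the second equality from the first identity of Theorem \ref{T3}.

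\textbf{First equality.} I would start from (\ref{9}) at index $n+m$, namely $B_{n+m}\left(z;\mathbf{r}_{p}\right)=\exp(-z)\sum_{k\geq0}P_{n+m}\left(k;\mathbf{r}_{p}\right)\frac{z^{k}}{k!}$, together with the factorization $P_{n+m}\left(k;\mathbf{r}_{p}\right)=\left(k+r_{p}\right)^{n+m}P_{0}\left(k;\mathbf{r}_{p}\right)$, which is immediate from the definition of $P_{t}\left(z;\mathbf{r}_{p}\right)$. Substituting this into $\sum_{n\geq0}B_{n+m}\left(z;\mathbf{r}_{p}\right)\frac{t^{n}}{n!}$ and interchanging the (formal) summations over $n$ and $k$, the inner sum over $n$ collapses to $\left(k+r_{p}\right)^{m}\exp\!\left(\left(k+r_{p}\right)t\right)$, giving
\[
\sum_{n\geq0}B_{n+m}\left(z;\mathbf{r}_{p}\right)\frac{t^{n}}{n!}=\exp(-z)\exp\left(r_{p}t\right)\sum_{k\geq0}P_{m}\left(k;\mathbf{r}_{p}\right)\frac{\left(z\exp(t)\right)^{k}}{k!}.
\]
Applying (\ref{9}) once more, now with $z$ replaced by $z\exp(t)$ and the index equal to $m$, identifies the remaining series as $\exp\!\left(z\exp(t)\right)B_{m}\left(z\exp(t);\mathbf{r}_{p}\right)$; collecting $\exp(-z)\exp\left(r_{p}t\right)\exp\!\left(z\exp(t)\right)=\exp\!\left(z\left(\exp(t)-1\right)+r_{p}t\right)$ then yields the first equality. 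The only points needing a word of justification are that $z\exp(t)$ is a power series in $t$ with nonzero constant term, so $B_{m}\left(z\exp(t);\mathbf{r}_{p}\right)$ and $\exp\!\left(z\exp(t)\right)$ are legitimate formal power series in $t$, and that the double series is absolutely summable, so the interchange is allowed.

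\textbf{Second equality.} Here I would invoke the first identity of Theorem \ref{T3} with $n$ replaced by $n+m$, namely $B_{n+m}\left(z;\mathbf{r}_{p}\right)=\sum_{k=0}^{\left\vert\mathbf{r}_{p-1}\right\vert}a_{k}\left(\mathbf{r}_{p-1}\right)B_{n+m+k}\left(z;r_{p}\right)$, and insert it into the left-hand side, interchanging the finite sum over $k$ with the sum over $n$. This reduces everything to the single-parameter generating function $\sum_{n\geq0}B_{n}\left(z;r_{p}\right)\frac{t^{n}}{n!}=\exp\!\left(z\left(\exp(t)-1\right)+r_{p}t\right)$, which is the case of one parameter in the identity from \cite{mih1} recalled just before the theorem (there $B_{0}\left(z;r_{p}\right)=1$). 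Since $\sum_{n\geq0}B_{n+\ell}\left(z;r_{p}\right)\frac{t^{n}}{n!}$ is exactly the $\ell$-th derivative in $t$ of $\sum_{n\geq0}B_{n}\left(z;r_{p}\right)\frac{t^{n}}{n!}$, taking $\ell=m+k$ gives $\sum_{n\geq0}B_{n+m+k}\left(z;r_{p}\right)\frac{t^{n}}{n!}=\frac{d^{m+k}}{dt^{m+k}}\exp\!\left(z\left(\exp(t)-1\right)+r_{p}t\right)$, and summing against $a_{k}\left(\mathbf{r}_{p-1}\right)$ produces the stated expression. (Equivalently, one could apply Theorem \ref{T3} directly to $B_{m}\left(z\exp(t);\mathbf{r}_{p}\right)$ inside the first equality.)

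I do not expect a genuine obstacle; the whole argument is bookkeeping. The step most likely to need care is the reindexing and the sum-interchange in the first equality, in particular keeping track of which variable carries the $r_{p}$-shift when (\ref{9}) is reused with argument $z\exp(t)$. For the second equality the only subtle point is that the index shift by $m+k$ in $B_{n+m+k}\left(z;r_{p}\right)$ corresponds to differentiating the exponential generating function $m+k$ times, so that the order of the derivative appearing in the statement is indeed $m+k$ and not merely $k$.
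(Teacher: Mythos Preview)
Your proposal is correct and follows essentially the same route as the paper's own proof: for the first equality you expand via (\ref{9}), interchange the sums, collapse the $n$-sum to $(k+r_p)^m\exp((k+r_p)t)$, and reapply (\ref{9}) at $z\exp(t)$; for the second you feed Theorem~\ref{T3} into the left-hand side and recognise the shifted $r_p$-Bell generating function as the $(m+k)$-th $t$-derivative of $\exp(z(\exp(t)-1)+r_pt)$. The paper does precisely this, so there is nothing to add.
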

\begin{proof}
Use (\ref{9}) to get%
\begin{align*}
\underset{n\geq0}{\sum}B_{n+m}\left(  z;\mathbf{r}_{p}\right)  \frac{t^{n}%
}{n!} &  =\underset{n\geq0}{\sum}\left(  \exp\left(  -z\right)  \underset{k\geq
0}{\sum}P_{0}\left(  k;\mathbf{r}_{p}\right)  \left(  k+r_{p}\right)
^{n+m}\frac{z^{k}}{k!}\right)  \frac{t^{n}}{n!}\\
&  =\exp\left(  -z\right)  \underset{k\geq0}{\sum}P_{0}\left(  k;\mathbf{r}%
_{p}\right)  \left(  k+r_{p}\right)  ^{m}\frac{z^{k}\exp\left(  \left(
k+r_{p}\right)  t\right)  }{k!}\\
&  =B_{m}\left(  z\exp\left(  t\right)  ;\mathbf{r}_{p}\right)  \exp\left(
z\left(  \exp\left(  t\right)  -1\right)  +r_{p}t\right)  .
\end{align*}
For the second part of the theorem, use Theorem \ref{T3} to obtain%
\begin{align*}
\underset{n\geq0}{\sum}B_{n+m}\left(  z;\mathbf{r}_{p}\right)  \frac{t^{n}}{n!}
&  =\underset{k=0}{\overset{\left\vert \mathbf{r}_{p-1}\right\vert }{\sum}%
}a_{k}\left(  \mathbf{r}_{p-1}\right)  \underset{n\geq0}{\sum}B_{n+m+k}\left(
z;r_{p}\right)  \frac{t^{n}}{n!}\\
&  =\underset{k=0}{\overset{\left\vert \mathbf{r}_{p-1}\right\vert }{\sum}%
}a_{k}\left(  \mathbf{r}_{p-1}\right)  \frac{d^{m+k}}{dt^{m+k}}\left(
\underset{n\geq0}{\sum}B_{n}\left(  z;r_{p}\right)  \frac{t^{n}}{n!}\right) \\
&  =\underset{k=0}{\overset{\left\vert \mathbf{r}_{p-1}\right\vert }{\sum}%
}a_{k}\left(  \mathbf{r}_{p-1}\right)  \frac{d^{m+k}}{dt^{m+k}}\left(
\exp\left(  z\left(  \exp\left(  t\right)  -1\right)  +r_{p}t\right)  \right).
\end{align*}
\end{proof}

\noindent Spivey \cite{spy} gave a beautiful combinatorial identity; after
that, in different ways, Belbachir et al. \cite{bel3},  Gould et al. \cite{gou},
generalized this identity on showing that the polynomial $B_{n+m}\left(  z\right)  =B_{n+m}\left(  z;\mathbf{0}\right)  $ admits a
recurrence relation related to the family of $\left\{  z^{i}B_{j}\left(z\right)  \right\}  $ as follows%
\begin{align}
B_{n+m}\left(  z\right)  =\sum_{k=0}^{n}\sum_{j=0}^{m}%
{m\brace j}\dbinom{n}{k}j^{n-k}z^{j}B_{k}\left(  z\right). \label{bb}
\end{align}
Recently, Xu \cite{xu} generalized this result on giving recurrence relation on a large family on Stirling numbers and Mihoubi et al. \cite{mih2} extend the above relation to $r$-Bell polynomials as
\begin{align}
B_{n+m,r}\left( x\right) & =\sum_{k=0}^{n}\sum_{j=0}^{m}{m+r\brace
j+r}_{\!r}\dbinom{n}{k}j^{n-k}x^{j}B_{k,r}\left( x\right) \label{bc}.
\end{align}
Other recurrence relations are given by Mez\H{o} \cite{mez3}.
The following theorem generalizes identities (\ref{bb}), (\ref{bc}), the Carlitz's identities
\cite{car1,car2} given by%
\begin{align*}
& B_{n+m}\left(  1;r\right)  =\sum_{k=0}^{m}{ m+r \brace k+r }
_{\!\!r}B_{n}\left(  1;k+r\right),   \\ & B_{n}\left(  1;r+s\right)
=\sum_{k=0}^{s}%
\genfrac{[}{]}{0pt}{}{s+r}{k+r}%
_{\!r}\left(  -1\right)  ^{s-k}B_{n+k}\left(  1;r\right)
\end{align*}
and shows that $B_{n+m}\left(  z;\mathbf{r}_{p}\right)  $ admits $r$-Stirling
recurrence\ coefficients in the families of basis
\begin{align*}
&\left\{  z^{j}B_{m}\left(  z;\mathbf{r}_{p}+j\mathbf{e}_{p}\right)  :0\leq
j\leq n\right\},   \\ & \left\{  z^{j}B_{m+i}\left(  z;r+j\right)  :0\leq
i\leq\left\vert \mathbf{r}_{p-1}\right\vert ,\ 0\leq j\leq n\right\}  ,
\end{align*}
where $B_{n}\left(  1;r\right)  $ is the number of ways to partition a set of
$n$ elements into non-empty subsets such that the $r$ first elements are in
different subsets.

\begin{theorem}
\label{T6}We have%
\begin{align*}
B_{n+m}\left(  z;\mathbf{r}_{p}\right)   &  =\underset{j=0}{\overset{n}{\sum}}%
{ n+r_p \brace j+r_p }_{\!\!r_p}z^{j}B_{m}\left(  z;\mathbf{r}_{p}+j\mathbf{e}_{p}\right)  ,\\
B_{n+m}\left(  z;\mathbf{r}_{p}\right)   &  =\underset{i=0}{\overset
{\left\vert \mathbf{r}_{p-1}\right\vert }{\sum}}\underset{j=0}{\overset
{n}{\sum}}{ n+r_p \brace j+r_p }
_{\!\!r_p}a_{i}\left(  \mathbf{r}_{p-1}\right)  z^{j}B_{m+i}\left(
z;r_{p}+j\right), \\
z^{n}B_{m}\left(  z;\mathbf{r}_{p}+n\mathbf{e}_{p}\right) & =\sum_{j=0}^{n}%
\genfrac{[}{]}{0pt}{}{n+r_{p}}{j+r_{p}}%
_{\!r_{p}}\left(  -1\right)  ^{n-j}B_{m+j}\left(  z;\mathbf{r}_{p}\right)  .
\end{align*}

\end{theorem}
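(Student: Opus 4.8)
The plan is to establish, as the common engine for all three identities, the expansion
\begin{equation*}
z^{j}B_{m}\!\left(z;\mathbf{r}_{p}+j\mathbf{e}_{p}\right)=\exp(-z)\sum_{\ell\geq0}\left(\ell+r_{p}\right)^{m}P_{0}\!\left(\ell;\mathbf{r}_{p}\right)\ell^{\underline{j}}\,\frac{z^{\ell}}{\ell!}.
\end{equation*}
To get this I would start from $(\ref{9})$ written as $B_{m}(z;\mathbf{s})=\exp(-z)\sum_{k\geq0}(k+s_{p})^{m}P_{0}(k;\mathbf{s})z^{k}/k!$. Taking $\mathbf{s}=\mathbf{r}_{p}+j\mathbf{e}_{p}$ leaves $r_{1},\ldots,r_{p-1}$ unchanged and replaces $r_{p}$ by $r_{p}+j$ in the argument shifts of $P_{0}$, so that $P_{0}(z;\mathbf{r}_{p}+j\mathbf{e}_{p})=P_{0}(z+j;\mathbf{r}_{p})$ and $(k+s_{p})^{m}=(k+j+r_{p})^{m}$; multiplying through by $z^{j}$ and re-indexing by $\ell=k+j$ converts $z^{k}/k!$ into $\ell^{\underline{j}}\,z^{\ell}/\ell!$, with no boundary issue since $\ell^{\underline{j}}=0$ for $\ell<j$.

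For the first identity I would multiply this expansion by ${n+r_{p}\brace j+r_{p}}_{\!\!r_{p}}$, sum over $0\leq j\leq n$, interchange the (finite) $j$-sum with the $\ell$-sum, and invoke the classical $r$-Stirling identity $\sum_{j}{n+r\brace j+r}_{\!\!r}x^{\underline{j}}=(x+r)^{n}$ (a polynomial identity) at $x=\ell$; the inner sum collapses to $(\ell+r_{p})^{n}$, whence the whole expression equals $\exp(-z)\sum_{\ell\geq0}(\ell+r_{p})^{n+m}P_{0}(\ell;\mathbf{r}_{p})z^{\ell}/\ell!=\exp(-z)\sum_{\ell\geq0}P_{n+m}(\ell;\mathbf{r}_{p})z^{\ell}/\ell!=B_{n+m}(z;\mathbf{r}_{p})$ by $(\ref{9})$. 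The third identity is the dual computation: take the expansion with $j=n$ and substitute the inverse relation $x^{\underline{n}}=\sum_{j=0}^{n}(-1)^{n-j}\genfrac{[}{]}{0pt}{}{n+r}{j+r}_{\!r}(x+r)^{j}$ at $x=\ell$, so that each $j$-term yields $(-1)^{n-j}\genfrac{[}{]}{0pt}{}{n+r_{p}}{j+r_{p}}_{\!r_{p}}$ times $\exp(-z)\sum_{\ell\geq0}(\ell+r_{p})^{m+j}P_{0}(\ell;\mathbf{r}_{p})z^{\ell}/\ell!=B_{m+j}(z;\mathbf{r}_{p})$. The second identity then follows at once from the first by writing $\mathbf{r}_{p}+j\mathbf{e}_{p}=(r_{1},\ldots,r_{p-1},r_{p}+j)$ and applying the first identity of Theorem~\ref{T3} to $B_{m}(z;\mathbf{r}_{p}+j\mathbf{e}_{p})$ (its $\mathbf{r}_{p-1}$-part being unchanged, its last entry now $r_{p}+j$), obtaining $B_{m}(z;\mathbf{r}_{p}+j\mathbf{e}_{p})=\sum_{i=0}^{\left\vert\mathbf{r}_{p-1}\right\vert}a_{i}(\mathbf{r}_{p-1})B_{m+i}(z;r_{p}+j)$, and substituting and interchanging the two finite sums.

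The only ingredients not already in the paper are the two $r$-Stirling identities just used, $(x+r)^{n}=\sum_{j}{n+r\brace j+r}_{\!\!r}x^{\underline{j}}$ and its inverse $x^{\underline{n}}=\sum_{j}(-1)^{n-j}\genfrac{[}{]}{0pt}{}{n+r}{j+r}_{\!r}(x+r)^{j}$; both are classical (Broder \cite{bro}) and amount to the orthogonality of the $r$-Stirling matrices of the first and second kinds. Consequently there is no real obstacle, only bookkeeping: getting the shift $\ell=k+j$ and the vanishing boundary terms right in the engine expansion, and noting that $\mathbf{r}_{p}+j\mathbf{e}_{p}$ still satisfies the standing convention $r_{1}\leq\cdots\leq r_{p}+j$, so that $B_{m}(z;\mathbf{r}_{p}+j\mathbf{e}_{p})$ is a bona fide instance of the polynomials under study.
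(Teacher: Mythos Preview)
Your proof is correct. The engine expansion is sound: the key identity $P_{0}(z;\mathbf{r}_{p}+j\mathbf{e}_{p})=P_{0}(z+j;\mathbf{r}_{p})$ holds because only the last coordinate enters the argument shift in $P_{0}$, and the re-indexing $\ell=k+j$ with the vanishing of $\ell^{\underline{j}}$ for $\ell<j$ is exactly right. The $r$-Stirling identity $(x+r)^{n}=\sum_{j}{n+r\brace j+r}_{r}x^{\underline{j}}$ you invoke is in fact the $p=1$ case of the paper's identity~(\ref{7}), and its inverse is used (in an equivalent form) by the paper itself in its proof of the third identity.

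Your route differs from the paper's chiefly in the \emph{first} identity. The paper works at the level of exponential generating functions: it forms $T_{m}(z;\mathbf{r}_{p})=\sum_{n\geq0}(\text{RHS})\,t^{n}/n!$, rewrites $B_{m}(z;\mathbf{r}_{p}+j\mathbf{e}_{p})$ via the differential relation $\exp(z)B_{m}(z;\mathbf{r}_{p}+j\mathbf{e}_{p})=\frac{d^{j}}{dz^{j}}\bigl(\exp(z)B_{m}(z;\mathbf{r}_{p})\bigr)$, sums the resulting Taylor series, and then matches against the closed form of Theorem~\ref{T4}. Your approach stays entirely inside the Dobi\'nski-type series~(\ref{9}) and collapses the $j$-sum pointwise in $\ell$; this is more elementary (no auxiliary variable $t$, no appeal to Theorem~\ref{T4}) and has the pleasant feature that the same engine drives all three identities uniformly. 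The paper's approach, on the other hand, makes the generating-function structure visible and explains the appearance of $B_{m}(z\exp(t);\mathbf{r}_{p})$. For the second identity both proofs are the same (apply Theorem~\ref{T3}), and for the third identity the paper performs essentially your computation in the reverse direction, starting from the right-hand side.
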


\begin{proof}
Let $T_{m}\left(  z;\mathbf{r}_{p}\right)  :=\underset{n\geq0}{\sum}\left(
\underset{j=0}{\overset{n}{\sum}}%
{ n+r_p \brace j+r_p }_{\!\!r_p}z^{j}B_{m}\left(  z;\mathbf{r}_{p}+j\mathbf{e}_{p}\right)  \right)
\frac{t^{n}}{n!}.$ \\ \\The second identity given in \cite[Corollary 12]{mih1} by
\begin{align*}
\exp\left(  z\right) B_{m}\left(  z;\mathbf{r}_{p}+\mathbf{e}_{p}\right) &  =  \frac{d}{dz}\left(  \exp\left(  z\right)  B_{m}\left(  z;\mathbf{r}%
_{p}\right)  \right)
\end{align*}
can be used to verify that we have%
\begin{equation}
\exp\left(  z\right)B_{m}\left(  z;\mathbf{r}_{p}+j\mathbf{e}_{p}\right)  =
\frac{d^{j}}{dz^{j}}\left(  \exp\left(  z\right)  B_{m}\left(  z;\mathbf{r}%
_{p}\right)  \right)  .\label{8}%
\end{equation}
Identity (\ref{8}) and the following generating function (see \cite{bro})%
\[
\underset{n\geq j}{\sum}%
{ n+r_p \brace j+r_p }_{\!\!r_p}\frac{t^{n}}{n!}=\frac{1}{j!}\left(  \exp\left(  t\right)  -1\right)
^{j}\exp\left(  r_{p}t\right)
\]
prove that%
\begin{align*}
T_{m}\left(  z;\mathbf{r}_{p}\right)   &  =\underset{j\geq0}{\sum}B_{m}\left(
z;\mathbf{r}_{p}+j\mathbf{e}_{p}\right)  z^{j}\frac{1}{j!}\left(  \exp\left(
t\right)  -1\right)  ^{j}\exp\left(  r_{p}t\right) \\
&  =\exp\left(  r_{p}t-z\right)  \underset{j\geq0}{\sum}\frac{d^{j}}{dz^{j}%
}\left(  \exp\left(  z\right)  B_{m}\left(  z;\mathbf{r}_{p}\right)  \right)
\frac{\left(  z\left(  \exp\left(  t\right)  -1\right)  \right)  ^{j}}{j!}.
\end{align*}
Now, by the Taylor-Maclaurin's expansion we have%
\[
\underset{j\geq0}{\sum}\frac{d^{j}}{dz^{j}}\left(  \exp\left(  z\right)
B_{m}\left(  z;\mathbf{r}_{p}\right)  \right)  \frac{\left(  u-z\right)  ^{j}%
}{j!}=\exp\left(  u\right)  B_{m}\left(  u;\mathbf{r}_{p}\right)  ,
\]
and we get
$T_{m}\left(  z;\mathbf{r}_{p}\right)  =\exp\left(  r_{p}t-z\right)
\exp\left(  z\exp\left(  t\right)  \right)  B_{m}\left(  z\exp\left(
t\right)  ;\mathbf{r}_{p}\right).$
\\ By Theorem \ref{T4}, we obtain
$T_{m}\left(  z;\mathbf{r}_{p}\right)  =
\underset{n\geq0}{\sum}B_{n+m}\left(  z;\mathbf{r}_{p}\right)  \frac{t^{n}}{n!}.$
\\ By identification, the first identity of Theorem follows.\\ The
second identity follows on utilizing Theorem \ref{T3} to replace $B_{m}\left(
z;\mathbf{r}_{p}+j\mathbf{e}_{p}\right)  $ by \ %
\[
\sum_{i=0}^{\left\vert \mathbf{r}_{p-1}\right\vert }a_{i}\left(  \mathbf{r}_{p-1}\right)
B_{m+i}\left(  z;j+r_{p}\right)  .
\]
For the third identity, let
$A:= \sum_{j=0}^{n}\genfrac{[}{]}{0pt}{}{n+r_{p}}{j+r_{p}}
_{\!r_{p}}\left(  -1\right)  ^{n-j}B_{m+j}\left(  z;\mathbf{r}_{p}\right).$
\\ We use the identity (\ref{9}) and $\left(
k+r_{p}\right)  ^{\underline{n}}=\underset{j=0}{\overset{n}{\sum}}
\genfrac{[}{]}{0pt}{}{n+r_{p}}{j+r_{p}}_{\!r_{p}}k^{j},$ see \cite{bro}, to obtain%
\begin{align*}
A &=\exp\left(  -z\right)  \underset{k\geq0}{\sum}P_{m}\left(  k;\mathbf{r}%
_{p}\right)  \frac{z^{k}}{k!}\sum_{j=0}^{n}%
\genfrac{[}{]}{0pt}{}{n+r_{p}}{j+r_{p}}%
_{\!r_{p}}\left(  -1\right)  ^{n-j}\left(  k+r_{p}\right)  ^{j}\\
&  =\left(  -1\right)  ^{n}\exp\left(  -z\right)  \underset{k\geq0}{\sum}%
P_{m}\left(  k;\mathbf{r}_{p}\right)  \frac{z^{k}}{k!}\sum_{j=0}^{n}%
\genfrac{[}{]}{0pt}{}{n+r_{p}}{j+r_{p}}%
_{\!r_{p}}\left(  -k-r_{p}\right)  ^{j}\\
&  =\left(  -1\right)  ^{n}\exp\left(  -z\right)  \underset{k\geq0}{\sum}%
P_{m}\left(  k;\mathbf{r}_{p}\right)  \frac{z^{k}}{k!}\left(  -k-r_{p}%
+r_{p}\right)  ^{\overline{n}}\\
&  =\exp\left(  -z\right)  \underset{k\geq n}{\sum}P_{m}\left(  k;\mathbf{r}%
_{p}\right)  k^{\underline{n}}\frac{z^{k}}{k!}\\
&  =z^{n}\exp\left(  -z\right)  \underset{k\geq0}{\sum}P_{m}\left(
k+n;\mathbf{r}_{p}\right)  \frac{z^{k}}{k!}\\
&  =z^{n}B_{m}\left(  z;\mathbf{r}_{p}+n\mathbf{e}_{p}\right)  .
\end{align*}
\end{proof}

\noindent As consequences of the last theorem, some identities for the $\left(  r_{1},\ldots,r_{p}\right)  $-Stirling
numbers of second kind are given by the following corollary.
\begin{corollary}
We have%
\begin{align*}
\underset{i=0}{\overset{k}{\sum}}%
{m+\left\vert \mathbf{r}_{p}\right\vert \brace i+r_{p}}_{\!\!\mathbf{r}_{p}}%
{n+r_{p} \brace k-i+r_{p}}_{\!\!r_{p}} &  =%
{n+m+\left\vert \mathbf{r}_{p}\right\vert \brace k+r_{p}}_{\!\!\mathbf{r}_{p}},\\
\sum_{j=0}^{n}%
{m+j+\left\vert \mathbf{r}_{p}\right\vert \brace k+n+r_{p}}_{\!\!\mathbf{r}_{p}}%
\genfrac{[}{]}{0pt}{}{n+r_{p}}{j+r_{p}}%
_{\!r_{p}}\left(  -1\right)  ^{n-j} &  =%
{m+\left\vert \mathbf{r}_{p}\right\vert +n \brace k+r_{p}+n}_{\!\!\mathbf{r}_{p}+n\mathbf{e}_{p}}, \\
\sum_{j=0}^{n}%
{m+j+\left\vert \mathbf{r}_{p}\right\vert \brace k+r_{p}}_{\!\!\mathbf{r}_{p}}%
\genfrac{[}{]}{0pt}{}{n+r_{p}}{j+r_{p}}%
_{\!r_{p}}\left(  -1\right)  ^{n-j}&=0,\ \ k<n.
\end{align*}

\end{corollary}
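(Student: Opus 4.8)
All three identities are obtained by comparing coefficients of suitable powers of $z$ in the polynomial identities of Theorem \ref{T6} (together with Theorem \ref{T4}). The plan is to expand each side by means of the defining expansion $B_{N}(z;\mathbf{r}_{p})=\sum_{\ell\ge 0}{ N+\left\vert\mathbf{r}_{p}\right\vert \brace \ell+r_{p} }_{\!\!\mathbf{r}_{p}}z^{\ell}$ (and its analogue with $\mathbf{r}_{p}$ replaced by $\mathbf{r}_{p}+n\mathbf{e}_{p}$), together with the one-parameter Broder specialization $B_{N}(z;r)=\sum_{\ell\ge 0}{ N+r \brace \ell+r }_{\!\!r}z^{\ell}$, and then to read off the coefficient of the relevant monomial.

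For the first identity I would first record the expansion of $B_{n+m}(z;\mathbf{r}_{p})$ along the family $\{z^{i}B_{n}(z;i+r_{p})\}$. Starting from Theorem \ref{T4} and writing $B_{m}(z\exp(t);\mathbf{r}_{p})=\sum_{i}{ m+\left\vert\mathbf{r}_{p}\right\vert \brace i+r_{p} }_{\!\!\mathbf{r}_{p}}z^{i}\exp(it)$, one absorbs the factor $\exp(it)$ into $\exp(z(\exp(t)-1)+r_{p}t)$, turning it into $\exp(z(\exp(t)-1)+(i+r_{p})t)$, which is the exponential generating function of the $(i+r_{p})$-Bell polynomials; comparing the coefficients of $t^{n}/n!$ then gives $B_{n+m}(z;\mathbf{r}_{p})=\sum_{i}{ m+\left\vert\mathbf{r}_{p}\right\vert \brace i+r_{p} }_{\!\!\mathbf{r}_{p}}z^{i}B_{n}(z;i+r_{p})$. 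Expanding $B_{n}(z;i+r_{p})$ by the Broder formula and collecting the coefficient of $z^{k}$ (so that the surviving term has inner index $k-i$) yields the first identity.

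For the second and third identities I would use the third identity of Theorem \ref{T6}, namely $z^{n}B_{m}(z;\mathbf{r}_{p}+n\mathbf{e}_{p})=\sum_{j=0}^{n}\genfrac{[}{]}{0pt}{}{n+r_{p}}{j+r_{p}}_{\!r_{p}}(-1)^{n-j}B_{m+j}(z;\mathbf{r}_{p})$. Expanding $B_{m+j}(z;\mathbf{r}_{p})=\sum_{\ell}{ m+j+\left\vert\mathbf{r}_{p}\right\vert \brace \ell+r_{p} }_{\!\!\mathbf{r}_{p}}z^{\ell}$ on the right and $z^{n}B_{m}(z;\mathbf{r}_{p}+n\mathbf{e}_{p})=\sum_{\ell}{ m+\left\vert\mathbf{r}_{p}\right\vert+n \brace \ell+r_{p}+n }_{\!\!\mathbf{r}_{p}+n\mathbf{e}_{p}}z^{\ell+n}$ on the left, I compare the coefficient of $z^{k+n}$ to obtain the second identity. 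Since the left-hand side has no monomial of degree below $n$ in $z$, comparing the coefficient of $z^{k}$ for $k<n$ forces the remaining sum on the right to vanish, which is the third identity.

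None of this is difficult once set up; the only delicate point is the index bookkeeping in the reduced $r$-Stirling notation ${ N+r \brace \ell+r }_{\!\!r}$, i.e.\ keeping careful track of which argument carries the shift ($+r_{p}$, $+(i+r_{p})$, or $+(r_{p}+n)$) so that the extracted coefficients come out in exactly the stated form. I do not anticipate any genuine obstacle beyond this bookkeeping.
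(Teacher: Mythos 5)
Your handling of the second and third identities is correct and coincides with the paper's own argument: expand both sides of the third identity of Theorem \ref{T6} in powers of $z$, compare the coefficients of $z^{k+n}$ to get the second identity, and observe that the left-hand side $z^{n}B_{m}(z;\mathbf{r}_{p}+n\mathbf{e}_{p})$ has no monomial of degree $k<n$ to get the third. No issue there.

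The first identity is where there is a genuine gap, and it is not the harmless ``bookkeeping'' you defer at the end. Your derivation from Theorem \ref{T4} of the expansion $B_{n+m}(z;\mathbf{r}_{p})=\sum_{i}{m+\vert\mathbf{r}_{p}\vert\brace i+r_{p}}_{\!\mathbf{r}_{p}}z^{i}B_{n}(z;i+r_{p})$ is sound (and is a pleasant dual to the paper's route via the first identity of Theorem \ref{T6}). But the coefficient you then extract is not the stated one: the Broder expansion of $B_{n}(z;i+r_{p})$ is $\sum_{\ell}{n+i+r_{p}\brace \ell+i+r_{p}}_{\!i+r_{p}}z^{\ell}$ --- the shift $i+r_{p}$ occupies \emph{all three} slots --- so the coefficient of $z^{k-i}$ is ${n+i+r_{p}\brace k+r_{p}}_{\!i+r_{p}}$, not ${n+r_{p}\brace k-i+r_{p}}_{\!r_{p}}$. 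The latter is the coefficient of $z^{k-i}$ in $B_{n}(z;r_{p})$, so your last step silently replaces $B_{n}(z;i+r_{p})$ by $B_{n}(z;r_{p})$. This cannot be repaired by more careful indexing, because the printed identity is false: for $p=1$, $r_{1}=1$, $m=n=k=1$ the left side is ${2\brace 1}_{\!1}{2\brace 2}_{\!1}+{2\brace 2}_{\!1}{2\brace 1}_{\!1}=2$, while the right side is ${3\brace 2}_{\!1}=3$. What your computation actually establishes is the correct convolution ${n+m+\vert\mathbf{r}_{p}\vert\brace k+r_{p}}_{\!\mathbf{r}_{p}}=\sum_{i}{m+\vert\mathbf{r}_{p}\vert\brace i+r_{p}}_{\!\mathbf{r}_{p}}{n+i+r_{p}\brace k+r_{p}}_{\!i+r_{p}}$, which does check out on the same example ($1\cdot 1+1\cdot 2=3$). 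For what it is worth, the paper's own proof commits exactly the same substitution (it expands $B_{m}(z;\mathbf{r}_{p}+j\mathbf{e}_{p})$ as though it were $B_{m}(z;\mathbf{r}_{p})$), so the first identity of the corollary should be flagged as erroneous rather than merely unproved.
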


\begin{proof}
From the first identity of Theorem \ref{T6} we have
\begin{align*}
B_{n+m}\left(  z;\mathbf{r}_{p}\right)   =\underset{j=0}{\overset{n}{\sum}}%
{n+r_{p}\brace j+r_{p}}_{\!\!r_{p}}z^{j}B_{m}\left(  z;\mathbf{r}_{p}+j\mathbf{e}_{p}\right)
\end{align*}
which can be written as
\begin{align*}
\underset{j=0}{\overset{n}{\sum}}%
{n+r_{p}\brace j+r_{p}}_{\!\!r_{p}}z^{j}\underset{i=0}{\overset{m+\left\vert \mathbf{r}_{p-1}\right\vert }{\sum}}%
{m+\left\vert \mathbf{r}_{p}\right\vert \brace i+r_{p}}_{\!\!\mathbf{r}_{p}}z^{i}
\\=\underset{k=0}{\overset{n+m+\left\vert \mathbf{r}_{p-1}\right\vert }{\sum}}z^{k}\underset{i=0}{\overset{k}{\sum}}%
{m+\left\vert \mathbf{r}_{p}\right\vert \brace i+r_{p}}_{\!\!\mathbf{r}_{p}}{n+r_{p}\brace k-i+r_{p}}_{\!\!r_{p}}.
\end{align*}
Then, use the definition of $B_{n+m}\left(  z;\mathbf{r}_{p}\right)  ,$ the
desired identity follows by identification. Using the definition of $B_{n}\left(  z;\mathbf{r}_{p}\right)
$ and the third identity of Theorem \ref{T6}, the second and the third identities of
the corollary follow by the fact that
\begin{align*}
\underset{k=0}{\overset{m+\left\vert \mathbf{r}_{p-1}\right\vert }{\sum}}%
B_{m}\left(  z;\mathbf{r}_{p}+n\mathbf{e}_{p}\right)
={m+\left\vert \mathbf{r}_{p}\right\vert +n\brace k+r_{p}+n}_{\!\!\mathbf{r}_{p}+n\mathbf{e}_{p}}z^{k}
\end{align*}
and from the expansion
\begin{align*}
B_{m}\left(  z;\mathbf{r}_{p}+n\mathbf{e}_{p}\right) & =z^{-n}\sum_{j=0}^{n}%
\genfrac{[}{]}{0pt}{}{n+r_{p}}{j+r_{p}}_{\!r_{p}}\left(  -1\right)  ^{n-j}B_{m+j}\left(  z;\mathbf{r}_{p}\right)
\\ &  =z^{-n}\sum_{j=0}^{n} \genfrac{[}{]}{0pt}{}{n+r_{p}}{j+r_{p}}
_{\!r_{p}}\left(  -1\right)  ^{n-j}\underset{k=0}{\overset{m+j+\left\vert
\mathbf{r}_{p-1}\right\vert }{\sum}}%
{m+j+\left\vert \mathbf{r}_{p}\right\vert \brace k+r_{p}}_{\!\!\mathbf{r}_{p}}z^{k}
\\ &  =\underset{k=0}{\overset{m+n+\left\vert \mathbf{r}_{p-1}\right\vert }{\sum
}}z^{k-n}\sum_{j=0}^{n}{m+j+\left\vert \mathbf{r}_{p}\right\vert \brace k+r_{p}}_{\!\!\mathbf{r}_{p}}
\genfrac{[}{]}{0pt}{}{n+r_{p}}{j+r_{p}}_{\!r_{p}}\left(  -1\right)  ^{n-j}
\\ &  =\underset{k=-n}{\overset{m+\left\vert \mathbf{r}_{p-1}\right\vert }{\sum}%
}z^{k}\sum_{j=0}^{n}%
{m+j+\left\vert \mathbf{r}_{p}\right\vert \brace k+n+r_{p}}_{\!\!\mathbf{r}_{p}}
\genfrac{[}{]}{0pt}{}{n+r_{p}}{j+r_{p}}_{\!r_{p}}\left(  -1\right)  ^{n-j}.%
\end{align*}
\end{proof}

\section{Ordinary generating functions }
The \textit{o.g.f.} of the $r$-Stirling numbers of the second kind \cite{bro} is given by%
\begin{align}
\underset{n\geq k}{\sum}%
{n+r\brace k+r}_{\!\!r}t^{n}=t^{k}\underset{j=0}{\overset{k}{%
{\displaystyle\prod}
}}\left(  1-\left(  r+j\right)  t\right)  ^{-1}. \label{zz}
\end{align}
An analogue result for the $\mathbf{r}_{p}$-Stirling numbers is given by the following theorem.

\begin{theorem}
Let%
\[
\widetilde{B}_{n}\left(  z;\mathbf{r}_{p}\right)  :=\underset{k=0}{\overset
{n}{\sum}}%
{n+\left\vert \mathbf{r}_{p}\right\vert \brace k+\left\vert
\mathbf{r}_{p}\right\vert }_{\!\!\mathbf{r}_{p}}z^{k}.
\]
Then, we have%
\begin{align*}
\underset{n\geq k}{\sum}%
{n+\left\vert \mathbf{r}_{p}\right\vert \brace k+\left\vert
\mathbf{r}_{p}\right\vert }
_{\!\!\mathbf{r}_{p}}t^{n}  & =t^{k+\left\vert \mathbf{r}_{p-1}\right\vert
}\left(  \frac{1}{t}\right)  ^{\underline{r_{1}}}\cdots\left(  \frac{1}%
{t}\right)  ^{\underline{r_{p-1}}}\underset{j=0}{\overset{k+\left\vert
\mathbf{r}_{p-1}\right\vert }{%
{\displaystyle\prod}
}}\left(  1-\left(  r_{p}+j\right)  t\right)  ^{-1},\\
\underset{n\geq0}{\sum}\widetilde{B}_{n}\left(  z;\mathbf{r}_{p}\right)
t^{n}  & =\left(  \frac{1}{t}\right)  ^{\underline{r_{1}}}\cdots\left(  \frac{1}{t}\right)
^{\underline{r_{p-1}}}\underset{k\geq\left\vert \mathbf{r}_{p-1}\right\vert
}{\sum}\frac{z^{k-\left\vert \mathbf{r}_{p-1}\right\vert }t ^{k}}{\underset{j=0}{\overset{k}{\prod }}\left( 1-\left( r_{p}+j\right) t\right)   }.
\end{align*}

\end{theorem}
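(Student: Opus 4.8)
The plan is to reduce both formulas to the classical ordinary generating function (\ref{zz}) for the $r$-Stirling numbers, using Corollary \ref{C1} to expand an $\mathbf{r}_{p}$-Stirling number in terms of $r_{p}$-Stirling numbers and identity (\ref{10}) to repackage the coefficients $a_{j}\left( \mathbf{r}_{p-1}\right)$ as the falling-factorial product evaluated at $u=1/t$.

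For the first identity I would start by writing the index as $k+\left\vert \mathbf{r}_{p}\right\vert =\left( k+\left\vert \mathbf{r}_{p-1}\right\vert \right) +r_{p}$ and applying Corollary \ref{C1} with its free parameter $k$ replaced by $k+\left\vert \mathbf{r}_{p-1}\right\vert$, giving
\[
{ n+\left\vert \mathbf{r}_{p}\right\vert \brace k+\left\vert \mathbf{r}_{p}\right\vert }_{\!\!\mathbf{r}_{p}}=\underset{j=0}{\overset{\left\vert \mathbf{r}_{p-1}\right\vert }{\sum }}a_{j}\left( \mathbf{r}_{p-1}\right) { n+j+r_{p} \brace k+\left\vert \mathbf{r}_{p-1}\right\vert +r_{p} }_{\!\!r_{p}}.
\]
Multiplying by $t^{n}$, summing over $n$, and interchanging the finite $j$-sum with the $n$-sum, for each fixed $j$ the substitution $m=n+j$ gives $t^{-j}\underset{m}{\sum }{ m+r_{p} \brace k+\left\vert \mathbf{r}_{p-1}\right\vert +r_{p} }_{\!\!r_{p}}t^{m}$; since this $r_{p}$-Stirling number vanishes for $m<k+\left\vert \mathbf{r}_{p-1}\right\vert$ and $j\leq \left\vert \mathbf{r}_{p-1}\right\vert$, the shift leaves the effective lower limit untouched, and (\ref{zz}) (with $r=r_{p}$ and with $k$ there replaced by $k+\left\vert \mathbf{r}_{p-1}\right\vert$) evaluates the sum to $t^{k+\left\vert \mathbf{r}_{p-1}\right\vert -j}\underset{i=0}{\overset{k+\left\vert \mathbf{r}_{p-1}\right\vert }{\prod }}\left( 1-\left( r_{p}+i\right) t\right) ^{-1}$. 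Summing these against $a_{j}\left( \mathbf{r}_{p-1}\right)$ and pulling out the factors independent of $j$, the leftover sum $\underset{j=0}{\overset{\left\vert \mathbf{r}_{p-1}\right\vert }{\sum }}a_{j}\left( \mathbf{r}_{p-1}\right) t^{-j}$ equals $\left( 1/t\right) ^{\underline{r_{1}}}\cdots \left( 1/t\right) ^{\underline{r_{p-1}}}$ by (\ref{10}) at $u=1/t$; this is exactly the first claimed identity.

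The second identity then follows quickly: substitute the definition of $\widetilde{B}_{n}\left( z;\mathbf{r}_{p}\right)$, interchange the $n$- and $k$-sums to get $\underset{n\geq 0}{\sum }\widetilde{B}_{n}\left( z;\mathbf{r}_{p}\right) t^{n}=\underset{k\geq 0}{\sum }z^{k}\underset{n\geq k}{\sum }{ n+\left\vert \mathbf{r}_{p}\right\vert \brace k+\left\vert \mathbf{r}_{p}\right\vert }_{\!\!\mathbf{r}_{p}}t^{n}$, insert the first identity, factor out $\left( 1/t\right) ^{\underline{r_{1}}}\cdots \left( 1/t\right) ^{\underline{r_{p-1}}}$, and reindex by $\ell =k+\left\vert \mathbf{r}_{p-1}\right\vert$ so that $z^{k}t^{k+\left\vert \mathbf{r}_{p-1}\right\vert }=z^{\ell -\left\vert \mathbf{r}_{p-1}\right\vert }t^{\ell }$ and the product runs from $0$ to $\ell$. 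Everything here is manipulation of formal power series in $t$, so there is no convergence concern; the only point needing care is tracking the three index shifts — between $k+r_{p}$ and $k+\left\vert \mathbf{r}_{p}\right\vert$, between $n$ and $m=n+j$, and between $k$ and $\ell =k+\left\vert \mathbf{r}_{p-1}\right\vert$ — and checking each time that the support of the Stirling numbers supplies the correct lower summation limit so that no spurious terms are introduced. That bookkeeping is the main obstacle; there is no deeper difficulty.
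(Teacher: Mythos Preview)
Your proposal is correct and follows essentially the same route as the paper's own proof: apply Corollary~\ref{C1} with $k$ shifted to $k+\left\vert \mathbf{r}_{p-1}\right\vert$, reindex the $n$-sum, use the vanishing of the $r_{p}$-Stirling numbers to unify the lower limit, and then invoke (\ref{zz}) and (\ref{10}) at $u=1/t$; the second identity is then obtained exactly as you describe by swapping sums and reindexing. Your write-up is in fact a bit more explicit than the paper's about the index bookkeeping, but the argument is the same.
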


\begin{proof}
Use Corollary \ref{C1} to obtain%
\begin{align*}
\underset{n\geq k}{\sum}%
{n+\left\vert \mathbf{r}_{p}\right\vert \brace k+\left\vert
\mathbf{r}_{p}\right\vert }_{\!\!\mathbf{r}_{p}}t^{n}  & =\underset{n\geq k}{\sum}%
{n+\left\vert \mathbf{r}_{p}\right\vert \brace k+\left\vert
\mathbf{r}_{p-1}\right\vert +r_{p}}_{\!\!\mathbf{r}_{p}}t^{n}\\
& =\underset{j=0}{\overset{\left\vert \mathbf{r}_{p-1}\right\vert }{\sum}%
}a_{j}\left(  \mathbf{r}_{p-1}\right)  t^{-j}\underset{n\geq k}{\sum}%
{n+j+r_{p}\brace k+\left\vert \mathbf{r}_{p-1}\right\vert
+r_{p}}_{\!\!r_{p}}t^{n+j}\\
& =\underset{j=0}{\overset{\left\vert \mathbf{r}_{p-1}\right\vert }{\sum}%
}a_{j}\left(  \mathbf{r}_{p-1}\right)  t^{-j}\underset{n\geq k+j}{\sum}%
{n+r_{p}\brace k+\left\vert \mathbf{r}_{p-1}\right\vert
+r_{p}}_{\!\!r_{p}}t^{n},
\end{align*}
and because ${n+r_{p}\brace k+\left\vert \mathbf{r}_{p-1}\right\vert
+r_{p}}_{\!\!r_{p}}=0$ for $n=k,\ldots,+\left\vert \mathbf{r}_{p-1}\right\vert-1,$ we get
\begin{align*}
\underset{n\geq k}{\sum}%
{n+\left\vert \mathbf{r}_{p}\right\vert \brace k+\left\vert
\mathbf{r}_{p}\right\vert }_{\!\!\mathbf{r}_{p}}t^{n}
& =\left(  \underset{n\geq k+\left\vert \mathbf{r}_{p-1}\right\vert }{\sum}%
{n+r_{p}\brace k+\left\vert \mathbf{r}_{p-1}\right\vert
+r_{p}}_{\!\!r_{p}}t^{n}\right)  \left(  \underset{j=0}{\overset{\left\vert
\mathbf{r}_{p-1}\right\vert }{\sum}}a_{j}\left(  \mathbf{r}_{p-1}\right)
t^{-j}\right)  .
\end{align*}
The first generating function of the theorem follows by using (\ref{10}) and  (\ref{zz}).
For the second one, use the definition of $\widetilde{B}_{n}\left(z;\mathbf{r}_{p}\right)  $ and the last expansion.
\end{proof}

\section{Acknowledgments}
The authors would like to acknowledge the support from the PNR project 8/U160/3172 and the RECITS's laboratory.

\end{document}